\newtcolorbox{probbox}{arc=6pt,
                      colback=white!100,
                      colframe=black!50,
                      before skip=6pt,
                      after skip=6pt,
                      boxsep=1pt,
                      left=6pt,
                      right=6pt,
                      top=4pt,
                      bottom=4pt}
\theoremstyle{plain}
\newtheorem{thm}{Theorem}[section]
\newtheorem{lem}[thm]{Lemma}
\newtheorem{cor}[thm]{Corollary}
\newtheorem{cl}[thm]{Claim}
\newtheorem{conj}[thm]{Conjecture}
\theoremstyle{definition}
\newtheorem{rem}[thm]{Remark}
\newcommand{\cB}{\mathcal{B}}
\newcommand{\cE}{\mathcal{E}}
\newcommand{\cH}{\mathcal{H}}
\newcommand{\underlying}[1]{u(#1)}
\newlength{\bibitemsep}\setlength{\bibitemsep}{.1\baselineskip plus .05\baselineskip minus .05\baselineskip}
\newlength{\bibparskip}\setlength{\bibparskip}{1.2pt}
\let\oldthebibliography\thebibliography
\renewcommand\thebibliography[1]{%
  \oldthebibliography{#1}%
  \setlength{\parskip}{\bibitemsep}%
  \setlength{\itemsep}{\bibparskip}%
}
\renewcommand{\paragraph}{%
  \@startsection{paragraph}{4}%
  {\z@}{1.6ex \@plus 1ex \@minus .2ex}{-0.5em}%
  {\normalfont\normalsize\bfseries}%
}
\def\final{1}  %
\def\iflong{\iffalse}
\newcommand{\yu}[1]{{\color{magenta}[{\tiny \textbf{Yu:} \bf #1}]\marginpar{\color{red}*}}}
\newcommand{\yutaro}[1]{{\color{red}[{\tiny \textbf{Yutaro:} \bf #1}]\marginpar{\color{red}*}}}
\newcommand{\tamas}[1]{{\color{red}[{\tiny \textbf{Tam\'as:} \bf #1}]\marginpar{\color{red}*}}}
\newcommand{\kristof}[1]{{\color{red}[{\tiny \textbf{Krist\'of:} \bf #1}]\marginpar{\color{red}*}}}
\newcommand{\yu}[1]{}
\newcommand{\yutaro}[1]{}
\newcommand{\tamas}[1]{}
\newcommand{\kristof}[1]{}
\title{Rainbow Arborescence Conjecture}
\author{
Kristóf Bérczi\thanks{MTA-ELTE Matroid Optimization Research Group and HUN-REN–ELTE Egerváry Research Group, Department of Operations Research, ELTE Eötvös Loránd University, and HUN-REN Alfréd Rényi Institute of Mathematics, Budapest, Hungary. Email: \texttt{kristof.berczi@ttk.elte.hu}.}
\and
Tamás Király\thanks{HUN-REN-ELTE Egerv\'ary Research Group, Department of Operations Research, E\"otv\"os Loránd University, Budapest, Hungary. Email: \texttt{tamas.kiraly@ttk.elte.hu}.}
\and
Yutaro Yamaguchi\thanks{Department of Information and Physical Sciences, Graduate School of Information Science and Technology, Osaka University, Osaka, Japan. Email: \texttt{yutaro.yamaguchi@ist.osaka-u.ac.jp}.}
\and
Yu Yokoi\thanks{Department of Mathematical and Computing Science, School of Computing, Institute of Science Tokyo, Tokyo, Japan. Email: \texttt{yokoi@comp.isct.ac.jp}.}
}
\date{}
\begin{document}
\maketitle
\thispagestyle{empty}

\begin{abstract}
The famous Ryser--Brualdi--Stein conjecture asserts that every $k \times k$ Latin square contains a partial transversal of size $k-1$. Since its appearance, the conjecture has attracted significant interest, leading to several proposed generalizations. One of the most notable of these, by Aharoni, Kotlar, and Ziv, conjectures that $k$ disjoint common bases of two matroids of rank $k$ have a common independent partial transversal of size $k-1$. Although simple counterexamples show that the size $k-1$ above cannot be improved to $k$ (i.e., a transversal instead of a partial transversal), it is remarkable that no such counterexample is known for the special case of spanning arborescences. This motivated the formulation of the \emph{Rainbow Arborescence Conjecture}: any graph on $n$ vertices formed by the union of $n-1$ spanning arborescences contains an arborescence using exactly one arc from each. 

We prove several partial results on this conjecture. We show that the computational problem of testing the existence of such an arborescence with a fixed root is NP-complete, verify the conjecture in several special cases, and study relaxations of the problem. In particular, we establish the validity of the conjecture when the underlying undirected graph is a cycle; this also yields a new result on systems of distinct representatives for intervals on a cycle. 
\medskip

\noindent \textbf{Keywords:} Latin squares, Matroid intersection, Rainbow arborescences, Transversals
\end{abstract}
\newpage
\pagenumbering{roman}
\tableofcontents
\thispagestyle{empty}
\newpage
\pagenumbering{arabic}
\setcounter{page}{1}

\section{Introduction}
\label{sec:intro}

A Latin square is a $k \times k$ matrix where each row/column contains all distinct numbers, $1$ through $k$.
A partial transversal is a subset of entries in the matrix that are distinct numbers in distinct rows and columns.
Ryser, Brualdi, and Stein~\cite{Brualdi_Ryser_1991,stein1975transversals} conjectured that every $k \times k$ Latin square contains a partial transversal of size $k-1$.
Woolbright~\cite{woolbright1978n} and independently Brouwer, de Vries, and Wieringa~\cite{brouwer1978lower} showed that a partial transversal of size $k - \sqrt{k}$ always exists.
Shor and Hatami~\cite{shor1982lower,hatami2008lower} improved this bound to $k - O(\log^2 k)$.
Cameron and Wanless~\cite{cameron2005covering} further relaxed the problem, showing that every Latin square contains a subset of entries such that they are in distinct rows and columns and no number appears more than twice.

A matroidal generalization of the conjecture was proposed by Chappell~\cite{chappell1999matroid} and later by Kotlar and Ziv~\cite{kotlar2012length}.
A matroidal Latin square is a $k \times k$ matrix whose elements are members of the ground set of a matroid, where each row/column forms an independent set.
An independent partial transversal is an independent set comprising entries from distinct rows and columns.
As a matroidal analogue of the Ryser--Brualdi--Stein conjecture, Kotlar and Ziv~\cite{kotlar2012length} conjectured that every $k \times k$ matroidal Latin square has an independent partial transversal of size $k-1$, and proved a lower bound of $\lceil 2k / 3 \rceil$ for the maximum size of such a set.

Aharoni, Kotlar, and Ziv~\cite{aharoni2013independent,aharoni2016rainbow} further generalized the conjecture to matroid intersection as follows: \emph{If $M$ and $N$ are matroids on the same ground set, then for any $k$ pairwise disjoint common independent sets of size $k$, there exists a common independent set of size $k-1$ intersecting each of the $k$ sets in at most one element.} They also established a lower bound $k-\sqrt{k}$ for the size of such a set, extending the results of \cite{woolbright1978n,brouwer1978lower} and improving the bound in \cite{kotlar2012length}. 

A simple example shows that the value $k-1$ above cannot be improved to $k$: the two perfect matchings of $C_4$ do not have a transversal that is a perfect matching. Remarkably, no such example is known in the case where the common bases are the spanning arborescences of a directed graph $G$. Here, an \emph{arborescence} in $G$ is a sub-digraph whose underlying undirected graph is a tree and every in-degree is at most one, and an arborescence is $\emph{spanning}$ if the underlying undirected graph is a spanning tree. The unique vertex of an arborescence with in-degree $0$ is called its \emph{root}. The lack of a counterexample motivated Yokoi, one of the authors, to propose the following conjecture \cite[Open Problem: Rainbow Arborescence Problem]{de2019combinatorial}.

\begin{conj}[Rainbow Arborescence Conjecture]\label{conj:1}
Let $G$ be the disjoint union of $n-1$ spanning arborescences $A_1,\dots,A_{n-1}$ on the same set of $n$ vertices. Then $G$ has a spanning arborescence $A$ that intersects each $A_i$ in exactly one arc.
\end{conj}
We think of the input arborescences as having different colors, hence the name of the conjecture. In Conjecture~\ref{conj:1}, all the colors are used exactly once; that is, $A$ is a transversal rather than a partial transversal. 
Therefore, the rainbow arborescence conjecture is stronger than the matroid intersection conjecture specialized to spanning arborescences: the latter would only imply the existence of a branching of size $n-2$ whose arcs have different colors, where a \emph{branching} is a digraph whose connected components are arborescences. 

Recall that in the matroid conjectures mentioned above, the $-1$ term cannot be removed even in the case of bipartite matchings. 
There is an apparent structural similarity between the rainbow bipartite matching problem and rainbow arborescence problem, because the former can be viewed as the intersection of three partition matroids, while the latter is the intersection of two partition matroids and a graphic matroid. 
It is therefore particularly interesting that if Conjecture~\ref{conj:1} is true, then the arborescence problem has significantly more attractive properties, despite the apparent structural similarity.

\subsection{Our results}
\label{sec:our}

Let $G$ be a digraph on a vertex set $V$ of size $n\geq 2$, formed as the disjoint union of $\ell\geq 1$ spanning arborescences $A_1,\dots,A_{\ell}$, where each $i\in [\ell]$ represents a \emph{color}; parallel arcs of different colors may occur. For ease of discussion, we identify subgraphs with their arc sets and call a subgraph $A$ \emph{rainbow} if $|A\cap A_i|\le 1$ for every $i\in [\ell]$. To build some intuition for the rainbow arborescence conjecture, let us first present some further variants. 

\begin{conj}\label{conj:2}
Let $k\ge n-1$ and let $G$ be the disjoint union of $k$ spanning arborescences $A_1,\dots,A_{k}$ on the same set of $n$ vertices.
Then $G$ has a rainbow spanning arborescence $A$.
\end{conj}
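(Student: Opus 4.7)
The plan is to deduce Conjecture~\ref{conj:2} from Conjecture~\ref{conj:1} by simply discarding the surplus colors. The key observation is monotonicity: a rainbow spanning arborescence inside the subgraph obtained by ignoring some colors is automatically rainbow with respect to the full palette, because it uses no arc of any ignored color. When $k = n-1$ the two statements coincide, so it suffices to handle the case $k > n-1$.

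Concretely, I would fix any $n-1$ of the $k$ arborescences, say $A_1, A_2, \dots, A_{n-1}$, and set $G' \coloneqq A_1 \cup A_2 \cup \cdots \cup A_{n-1}$. Since each $A_i$ spans $V$ and the $A_i$ are pairwise arc-disjoint inside $G$, the subgraph $G'$ is itself the arc-disjoint union of $n-1$ spanning arborescences on the vertex set $V$ of size $n$, which is precisely the hypothesis of Conjecture~\ref{conj:1}.

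Applying Conjecture~\ref{conj:1} to $G'$ then yields a rainbow spanning arborescence $B \subseteq G'$ satisfying $|B \cap A_i| \le 1$ for every $i \in [n-1]$. The same $B$ is rainbow in $G$: for $i \le n-1$ this is inherited from the conclusion of Conjecture~\ref{conj:1}, while for $i > n-1$ it is automatic because $B \subseteq G'$ is arc-disjoint from $A_i$. Hence $B$ is a rainbow spanning arborescence of $G$.

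No real obstacle arises---this is the ``easy observation'' alluded to in the excerpt. The argument does not depend on which $n-1$ colors are retained, and it uses no property of arborescences beyond the statement of Conjecture~\ref{conj:1}. Of course, the substantive content, namely Conjecture~\ref{conj:1} itself, is untouched by this reduction.
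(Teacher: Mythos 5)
Your reduction of Conjecture~\ref{conj:2} to Conjecture~\ref{conj:1} by discarding the $k-(n-1)$ surplus color classes is correct and is exactly the ``easy observation'' the paper has in mind when it states that Conjecture~\ref{conj:1} implies Conjecture~\ref{conj:2}. As you rightly note, this is a conditional reduction rather than an unconditional proof, but the paper claims nothing more than this implication.
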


\begin{conj}\label{conj:3}
Let $k \le n - 1$ and let $G$ be the disjoint union of $n-1$ spanning arborescences $A_1,\dots,A_{n-1}$ on the same set of $n$ vertices.
Then $G$ has a rainbow arborescence $A$ of size $k$.
\end{conj}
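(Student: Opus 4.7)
My plan is an inductive proof on $k$, extending a rainbow arborescence $B$ of size $k-1$ to one of size $k$ by appending an arc of an unused color, and falling back to an exchange argument when no such direct extension exists. The base cases $k \in \{0,1\}$ are immediate. For the inductive step, let $B$ be a rainbow arborescence rooted at $r_B$ with $|V(B)|=k$ and color set $C$ of size $k-1$; set $V_B := V(B)$, $\bar V_B := V \setminus V_B$ (size $n-k \ge 1$), $U := [n-1] \setminus C$ (size $n-k$), and for each $i \in U$ let $r_i$ denote the root of $A_i$.

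I would split on how the roots $r_i$ interact with $V_B$. If $r_i \in V_B$ for some $i \in U$, then $A_i$, being spanning, must contain a cut arc $(u,v)$ with $u \in V_B$ and $v \in \bar V_B$, and $B + (u,v)$ is the required rainbow arborescence of size $k$. If every $r_i$ lies in $\bar V_B$, consider for each $i \in U$ the unique arc $e_i = (u_i, r_B) \in A_i$ entering $r_B$; if some $u_i \in \bar V_B$, then $B + e_i$ works, re-rooted at $u_i$. The truly hard sub-case is when $u_i \in V_B$ for every $i \in U$, since then direct insertion of $e_i$ both violates the in-degree bound at $r_B$ and closes a cycle through the path of $B$ from $r_B$ to $u_i$.

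For the hard sub-case I would perform an exchange. Fix $i \in U$, follow the $A_i$-path from $r_i \in \bar V_B$ into $V_B$, and isolate the first cross arc $(a,b) \in A_i$ with $a \in \bar V_B$ and $b \in V_B \setminus \{r_B\}$. Let $e'$ be the arc of $B$ entering $b$, of some color $c \in C$. The swap $B' = B - e' + (a,b)$ reattaches $B$ through the new color $i$ at the cost of releasing the color $c$, which is then available to reinvest toward reaching further vertices of $\bar V_B$. The natural formulation is an alternating-sequence augmentation inside the three-matroid structure (graphic $\cap$ in-degree partition $\cap$ color partition), combined with a min-counterexample framing: take a maximum rainbow arborescence, assume it has size below $k$, and derive a contradiction from the forced rigidity of the hard sub-case.

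The main obstacle is that the hard sub-case at $k = n-1$ specializes exactly to Conjecture~\ref{conj:1}: $\bar V_B$ collapses to the single vertex $r_i$, which admits no incoming arc of color $i$ at all, so no single swap suffices and only a global recoloring can work. Any complete execution of the exchange strategy therefore proves Conjecture~\ref{conj:1}, mirroring the equivalence announced in the paper. I would accordingly target partial results first; for instance, applying the Aharoni--Kotlar--Ziv lower bound to the two matroids whose common bases are the spanning arborescences yields a rainbow branching of size $(n-1) - \sqrt{n-1}$, and a pigeonhole on component sizes then gives an unconditional rainbow arborescence of size $\Omega(\sqrt{n})$, verifying Conjecture~\ref{conj:3} in the small-$k$ regime.
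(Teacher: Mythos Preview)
The statement you are attempting is an open conjecture in the paper, equivalent to Conjecture~\ref{conj:1}; the paper does not prove it in full, and you correctly diagnose that your exchange argument collapses to exactly that open problem at $k=n-1$. So there is no ``paper's proof'' to match, and your recognition of the obstruction is accurate.

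Where your proposal falls short is in the partial result. You settle for a rainbow branching of size $(n-1)-\sqrt{n-1}$ via Aharoni--Kotlar--Ziv and then pigeonhole on components to extract a rainbow arborescence of size $\Omega(\sqrt{n})$. The paper does substantially better: Theorem~\ref{thm:relax2} establishes Conjecture~\ref{conj:3} for all $k\le\lfloor n/2\rfloor$. The missing idea is in your ``hard sub-case.'' You fix a \emph{single} unused color $i$, find one cross arc $(a,b)$, and propose a one-step swap that releases another color; this leaves you with an unstructured alternating sequence whose termination you cannot control. The paper instead uses \emph{all} unused colors at once: for each $i\in J$ it takes the path $P_i$ in $A_i$ ending at the current root $\tilde r$, and greedily builds a rainbow \emph{in}-arborescence rooted at $\tilde r$ inside $\bigcup_{i\in J}P_i$, adding one arc per color. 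The simple count $|J|=n-1-|\tilde B|\ge|\tilde V|$ (valid precisely when $|\tilde B|<\lfloor n/2\rfloor$) forces this in-arborescence to spill outside $\tilde V$, yielding a rainbow path $P$ from some vertex of $\bar V_B$ to $\tilde r$. Replacing the $\tilde B$-arcs entering internal vertices of $P$ by $P$ itself augments $\tilde B$ by one arc in a single stroke. Your single-color swap is the $|P|=1$ shadow of this construction; the multi-color in-arborescence and the counting inequality are what push the bound from $\Omega(\sqrt{n})$ to $\lfloor n/2\rfloor$.
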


Both of these conjectures are equivalent to Conjecture~\ref{conj:1}, and may be easier to approach when $k\neq n-1$. Note that the statements are identical when $k=n-1$, while for $k\neq n-1$, Conjecture~\ref{conj:1} implies Conjectures~\ref{conj:2} and \ref{conj:3} by straightforward observations.

Our contribution has three main components. In \Cref{sec:hardness}, we study the computational complexity of finding a rainbow spanning arborescence with a prescribed root. More precisely, we show that deciding whether a rainbow spanning arborescence exists with a given root is NP-complete, even under the strong restriction that the input arborescences have exactly two possible roots (Theorem~\ref{thm:NP-hard}).

In \Cref{sec:relax}, we study relaxations of the conjecture in two directions. First, in \Cref{sec:higher}, we relax the number of input arborescences and prove that Conjecture~\ref{conj:2} holds when the number of arborescences is at least $2n-4$ (Theorem~\ref{thm:relax1}), improving on what follows from earlier results of Kotlar and Ziv \cite{kotlar2015rainbow}. As a corollary, we obtain an arborescence version of a result of Cameron and Wanless \cite{cameron2005covering} (Corollary~\ref{cor:relax1}). Then, in \Cref{sec:colorfularb}, we relax the size requirement of the output structure and show that Conjecture~\ref{conj:3} holds for $k\le\lfloor n/2\rfloor$ (Theorem~\ref{thm:relax2}).

Several special cases of Conjecture~\ref{conj:1} are settled in \Cref{sec:solved}. After developing some technical tools in \Cref{sec:lemmas} -- including a reduction that allows us to eliminate stars (Lemma~\ref{lem:star}) and a sufficient condition based on the set of multi-roots, i.e., vertices that serve as the root of at least two input arborescences (Lemma~\ref{lem:greedy}) -- we first verify the conjecture when each input arborescence is a path, or more generally, a path or a star (Theorem~\ref{thm:path} and Corollary~\ref{cor:path_star}). We then prove the conjecture for the case when there are at most two multi-roots (Theorem~\ref{thm:two_multiroots}), which in particular implies the result for all graphs on at most six vertices (Corollary~\ref{cor:six}). We further verified that the conjecture is indeed true up to eight vertices using a computer (Remark~\ref{rem:8}). 
Furthermore, we show that the conjecture holds when all input arborescences arise from orienting the edges of a common tree (Theorem~\ref{thm:tree}).

The result of the paper with the most involved proof appears in \Cref{sec:cycle}, which was also reported in our separate preprint~\cite{berczi2025rainbow}: Conjecture~\ref{conj:1} holds when the underlying undirected graph is a cycle (Theorem~\ref{thm:cycle}). Although this setting may appear simple at first glance, its proof is highly nontrivial and requires a careful analysis of certain blocking sets; for this reason, we defer the proof to \Cref{sec:proof}. Combined with the tree case, the theorem yields the conjecture when the underlying graph is a pseudotree (Corollary~\ref{cor:uni}). The cycle case further leads to an application to systems of distinct representatives for families of intervals on a cycle that might be of independent combinatorial interest: \emph{If $C$ is cycle of length $n$ and $I_1,\dots, I_n$ are arbitrary (not necessarily distinct) intervals of $C$, then there exists an interval $J$ of $C$ such that the family $I_1 \triangle J,\dots, I_n \triangle J$ has a system of distinct representatives} (Corollary~\ref{cor:representative}).

\subsection{Related work and motivation}
\label{sec:related}

The conjectures presented in the introduction already provide strong motivation for studying rainbow arborescences, particularly given the extensive work devoted to rainbow matchings in bipartite graphs (which includes partial transversals of Latin squares). Let us mention other lines of research that appear to be closely related to the problem we investigate.

\paragraph{Relaxations.} Kotlar and Ziv~\cite{kotlar2015rainbow} extended the work of Drisko~\cite{drisko1998proof} and Chappell~\cite{chappell1999matroid}, showing that for any $2k-1$ disjoint common independent sets of size $k$, a common independent rainbow set of size $k$ always exists. Recently, Berger and McGinnis~\cite{berger2025common} showed that if $A_1,\dots,A_{2k-1}$ are common independent sets in two matroids $M$ and $N$ with $|A_i|\ge\min\{i,k\}$ for each $i$, then a partial rainbow set of size $k$ exists that is independent in both matroids. For a comprehensive overview of results on Latin squares, we recommend the survey by Wanless~\cite{wanless2011transversals}.

\paragraph{Three matroid intersection.} The problems proposed in \cite{aharoni2013independent,aharoni2016rainbow}, as well as Conjecture~\ref{conj:1}, are special cases of finding a maximum-size common independent set of three matroids. While $3$-matroid intersection is oracle-hard (see e.g.~\cite{berczi2021complexity}), the question naturally arises whether reasonable lower bounds can be obtained using existing approximation methods. Unfortunately, neither the technique of Aharoni and Berger~\cite{aharoni2006intersection} nor the iterative refinement approach by Linhares, Olver, Swamy, and Zenklusen~\cite{linhares2020approximate} suffices to establish the strong lower bounds proposed in the conjectures.

\paragraph{Equitability and fair representations.}
A matroid $M=(E,\cB)$ is called \emph{$\ell$-equitable} if the ground set $E$ is the disjoint union of two bases and for any subpartition $A_1\cup\dots\cup A_\ell \subseteq E$, there exist disjoint bases $B_1,B_2 \in \cB$ such that $\lfloor |A_j|/2\rfloor\allowbreak \le |B_i\cap A_j|\le\lceil|A_j|/2\rceil$ for all $i\in[2]$ and $j\in[\ell]$; the case $\ell=1$ is usually referred to as \emph{equitability}. Fekete and Szabó~\cite{fekete2011equitable} conjectured that every matroid whose ground set is partitioned into two disjoint bases is equitable, a result recently proved by Akrami, Liu, Raj, and Végh~\cite{akrami2025matroids2}. They also showed that $\ell$-equitability fails for $\ell\ge 2$, but provided a mild relaxation instead: $2$-equitability holds whenever at least one of $|A_1|$ or $|A_2|$ is odd, and when both are even it can fail only by a single element on one part. 

Equitability is closely related to fair and almost fair representations of partitions~\cite{aharoni2017fairb}. For $\alpha>0$, a set $X$ \emph{$\alpha$-fairly} represents a set $S$ if $|X\cap S|\ge\lfloor \alpha|S|\rfloor$, and \emph{almost $\alpha$-fairly} if $|X\cap S|\ge\lfloor \alpha|S|\rfloor-1$. Aharoni, Berger, Kotlar, and Ziv~\cite{aharoni2017faira} conjectured that if the ground set $E$ can be partitioned into $k$ independent sets in each of two matroids, then every partition of $E$ admits a common independent set that almost $1/k$-fairly represents all its parts. The relaxation of $2$-equitability established by Akrami, Liu, Raj, and Végh~\cite{akrami2025matroids2} confirms this conjecture in the special case where the two matroids are dual and the partition has at most three parts. For two-partitions, a slightly weaker bound was previously obtained in~\cite{aharoni2017faira}.

A common theme underlying equitability, $\ell$-equitability, and fair or almost fair representations is finding a common basis of two matroids whose intersections with the parts of a given partition satisfy prescribed size constraints. Conjecture~\ref{conj:1} reflects this theme: it asserts that if a digraph on $n$ vertices is the union of $k$ arborescences $A_1,\dots,A_k$ and $\alpha_1,\dots,\alpha_k$ are nonnegative integers satisfying $\sum_{i=1}^k \alpha_i = n-1$, then there exists an arborescence $A$ with $|A\cap A_i| = \alpha_i$ for each $i\in[k]$. In particular, choosing each $\alpha_i\in\{\lfloor (n-1)/k\rfloor,\lceil (n-1)/k\rceil\}$ guarantees an arborescence that meets the input arborescences in nearly equal proportions.

\section{Complexity with fixed root}
\label{sec:hardness}

Observe that if all colors have the same root $r$, then a rainbow spanning arborescence rooted at $r$ exists and can be found by the following simple greedy approach: consider the colors in an arbitrary order, and in each step add an arc of the current color that leaves the vertex set of the already constructed rainbow arborescence. Furthermore, there is no rainbow spanning arborescence rooted at a different vertex, since $r$ has no incoming arc in $G$. In the general setting, a natural approach to find a rainbow spanning arborescence would be to fix a root, start with an empty arc set, and augment it by adding an arc of each color one-by-one. It, however, turns out that finding a rainbow arborescence rooted at a fixed vertex is hard. More precisely, let \textsc{Rooted-RA} denote the problem where the input is a directed graph $G$ that is the disjoint union of $n-1$ spanning arborescences $A_1, \dots, A_{n-1}$ (with arbitrary roots) along with a target root vertex $r$, and the goal is to decide if $G$ contains a rainbow spanning arborescence rooted at $r$.

\begin{thm}\label{thm:NP-hard}
    \textsc{Rooted-RA} is NP-complete even if exactly two vertices are roots of the input arborescences.  
\end{thm}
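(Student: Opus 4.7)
Membership in NP is immediate: given a candidate subset of $n-1$ arcs, one can verify in polynomial time that it contains exactly one arc of each color and forms a spanning arborescence rooted at $r$ (check in-degrees, acyclicity, and connectedness from $r$).

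For NP-hardness, the plan is to reduce from a standard NP-complete problem such as 3-SAT. Given a 3-CNF formula $\varphi$ with variables $x_1,\dots,x_m$ and clauses $C_1,\dots,C_\ell$, I would construct a digraph $G$ on a carefully chosen vertex set $V$, designate two vertices $r,s \in V$, and build $n-1$ spanning arborescences $A_1,\dots,A_{n-1}$ each rooted in $\{r,s\}$, such that $\varphi$ is satisfiable iff $G$ has a rainbow spanning arborescence rooted at $r$. Equivalently, using the formulation in which one picks a bijection $\pi\colon V\setminus\{r\}\to[n-1]$ and then the unique arc of color $\pi(v)$ entering $v$, the task is to make the bijection-plus-acyclicity conditions encode 3-SAT.

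The construction would be gadget-based. For each variable $x_j$, a \emph{variable gadget} would offer a binary choice: a dedicated color whose rainbow representative must be one of exactly two candidate arcs, encoding the truth value of $x_j$; the in-degree-one requirement at certain internal gadget vertices then propagates this choice consistently to the corresponding "literal arcs" elsewhere in $G$. For each clause $C_i$, a \emph{clause gadget} centred at a new vertex $c_i$ would attach three colors (one per literal) with arcs into $c_i$, so that the unique incoming arc of $c_i$ must come from a color representing a literal that is true under the assignment selected by the variable gadgets. Finally, each color class would be padded into a spanning arborescence by attaching a chain or star of auxiliary vertices below $r$ or $s$; this padding plays no role in the encoding and, crucially, keeps every $A_i$ rooted at one of the two vertices $r$ or $s$, as required by the theorem statement.

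Completeness is routine—a satisfying assignment guides a compatible choice of arcs that assembles into a spanning $r$-arborescence using each color once. The main obstacle is soundness: showing that every rainbow spanning arborescence rooted at $r$ yields a satisfying assignment. This requires exploiting all three structural constraints simultaneously: (i) no arc enters $r$, (ii) every other vertex has exactly one incoming arc, and (iii) the arc set is acyclic. A natural gadget design arranges things so that any inconsistency—mixing true/false between a variable gadget and its literals, or trying to "satisfy" a clause with a literal set to false—either forces an arc into $r$, starves some non-root vertex of an incoming arc, or closes a directed cycle inside the gadget. The hard part is engineering the gadgets so that (a) every color class forms a spanning arborescence rooted in $\{r,s\}$, (b) the local choices cleanly encode variables and clauses, and (c) all case analyses in the soundness direction close off, simultaneously. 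Rigorously checking all combinations of local choices against (i)–(iii), while maintaining the two-root restriction, is where the bulk of the work lies.
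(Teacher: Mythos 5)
Your NP-membership argument is fine, and your high-level strategy---reduce from a known NP-complete problem, use gadgets, keep all color classes rooted in $\{r,s\}$---parallels the paper's approach in spirit. The paper, however, reduces from \textsc{3DM} rather than 3-SAT, and the choice matters: a 3DM instance is already a collection of triples, and the paper encodes each triple $H=(x,y,z)$ directly as an $s$--$t$ path of length four, one arc per element plus one "selector" arc, so that a rainbow $s_1$--$t_p$ path through $p$ copies of the base gadget automatically selects $p$ pairwise element-disjoint hyperedges, i.e., a perfect matching. There is no need for separate variable and clause gadgets, no propagation machinery, and the two roots ($t_p$ and $t$) arise naturally as the targets of the arcs that pad each branching to a spanning arborescence.

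The decisive problem with your submission is that it stops at the plan. You explicitly defer the construction of the variable and clause gadgets, the verification that each color class is a spanning arborescence rooted at $r$ or $s$, and---as you yourself flag---the entire soundness direction, which you call "where the bulk of the work lies." Those are precisely the parts that constitute a proof of NP-hardness. In particular, you never specify how the "binary choice" at a variable gadget is enforced by the in-degree-one and acyclicity constraints, nor how a clause vertex $c_i$ could have an available incoming arc only when some literal is true; absent concrete arc sets and a concrete color assignment, there is no way to check that the claimed equivalence with satisfiability holds, nor that exactly two roots suffice. A 3-SAT-based gadget reduction may well be possible, but as written this is a statement of intent, not an argument.
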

\begin{proof}
The problem is clearly in NP\@.
We prove hardness by reduction from the 3-dimensional matching problem, denoted by \textsc{3DM}\@.
In \textsc{3DM}, we are given a tripartite $3$-uniform hypergraph $\cH=(X,Y,Z;\mathcal{E})$ where $|X|=|Y|=|Z|=p$, $|\cE| = q$, and $\cE\subseteq X\times Y\times Z$, and the goal is to decide whether $\cH$ has a perfect matching.
This problem is known to be NP-complete~\cite[(SP2)]{garey1979computers}.

Let $\cH=(X,Y,Z;\cE)$ be an instance of \textsc{3DM}\@.
We construct an instance of \textsc{Rooted-RA} as follows.
Let $D_{st}$ denote the digraph obtained by taking two vertices $s$ and $t$, and adding a directed $s$--$t$ path $P_H=(e_{H,x},e_{H,y},e_{H,z},f_H)$ of length $4$ for each hyperedge $H=(x,y,z)$, where all the paths are openly disjoint; thus, $D_{st}$ consists of $3q + 2$ vertices and $4q$ arcs.
We take $p$ copies of $D_{st}$ and apply the notational convention that in the $j$th copy, we add $j$ as an upper index to the notation.
We connect these $p$ digraphs $D_{st}^j$ by identifying $t^j$ with $s^{j+1}$ for $j\in[p-1]$.
We also add an extra vertex $t$.

We assign color $c_v$ to all the arcs of the form $e_{H,v}^j$ for each $v\in X\cup Y\cup Z$.
Furthermore, we assign a unique color $c_{H,j}$ to every arc of the form $f^j_H$.
Note that at this point, every color class forms a branching. We extend these branchings to spanning arborescences as follows.
For each $v\in X\cup Y\cup Z$, we add an arc of color $c_v$ from $t_p$ to every other vertex with no incoming arc of color $c_v$.
That is, the root of the arborescence of color $c_v$ is $t_p$.
For each $H\in\cE$ and $j\in[p]$, we add an arc of color $c_{H,j}$ from $t$ to every other vertex with no incoming arc of color $c_{H,j}$.
That is, the root of the arborescence of color $c_{H,j}$ is $t$.
Finally, we add $2pq-2p+1$ stars rooted at $t_p$ of distinct colors.
Let $G = (V, E)$ be the digraph thus obtained.
It is easy to check from definition that $|V|=3pq+p+2$ and $E$ is the union of $3pq+p+1$ spanning arborescences.
We claim that $G$ has a rainbow spanning arborescence rooted at $s_1$ if and only if $\cH$ has a perfect matching. 

Assume first that $G$ has a rainbow spanning arborescence rooted at $s_1$ and $\cH$ has no perfect matching.
By construction, the arborescence must contain a rainbow $s_1$--$t_p$ path.
This path is the union of $s_j$--$t_j$ paths for $j\in[p]$, each corresponding to some hyperedge $H_j\in\cE$.
Since the $s_1$--$t_p$ path is rainbow, the hyperedges $H_1,\dots,H_p$ must be disjoint and hence is a perfect matching in $\cH$, a contradiction.

Now suppose that $\cH$ has a perfect matching, consisting of hyperedges $H_1,\dots,H_p\in\cE$.
The union of the paths $P^j_{H_j}$ is a rainbow $s_1$--$t_p$ path.
Using one of the stars rooted at $t_p$, we can extend this path to a rainbow $s_1$--$t$ path.
Finally, using the remaining stars and the colors $c_{H,j}$ rooted at $t$, we can finish the construction to obtain a rainbow spanning arborescence rooted at $s_1$. 
\end{proof}

\section{Relaxations}
\label{sec:relax}

In this section, we consider relaxations of Conjecture~\ref{conj:1} in two directions. First, we relax the number of input arborescences (Section~\ref{sec:higher}), and then we relax the size of the rainbow arboresence to be found (Section~\ref{sec:colorfularb}). 

\subsection{Relaxing the number of input arborescences}
\label{sec:higher}

The result of Kotlar and Ziv~\cite{kotlar2015rainbow}, when specialized to arborescences, implies that if $G$ is the disjoint union of $2n-3$ spanning arborescences, then there exists a rainbow spanning arborescence.
We improve this bound to $2n-4$ using a simpler proof (assuming Theorem~\ref{thm:two_multiroots}).

\begin{thm}\label{thm:relax1}
    Conjecture~\ref{conj:2} is true when $k \ge 2n - 3$. Furthermore, if $n \ge 3$, Conjecture~\ref{conj:2} is true when $k \ge 2n - 4$.
\end{thm}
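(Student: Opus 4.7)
The plan is to analyze a maximum-size rainbow branching $B \subseteq G$ and show that, under the given bounds on $k$, $B$ must in fact be a spanning arborescence. Assume for contradiction that $B$ has $c \ge 2$ components $B_1, \ldots, B_c$ with roots $s_1, \ldots, s_c$ and vertex sets $V(B_1), \ldots, V(B_c)$ partitioning $V$. Then $|B| = n - c$, and the set $U$ of colors not used by $B$ satisfies $|U| = k - (n-c) \ge n + c - 3$ (under $k \ge 2n - 3$) or $|U| \ge n + c - 4$ (under $k \ge 2n - 4$).

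The key consequence of maximality is a rigidity property: for every $i \in U$ and every $j \in [c]$ with $r_i \ne s_j$, the tail $p_i(s_j)$ of the unique arc of $A_i$ entering $s_j$ must lie in $V(B_j)$. Indeed, otherwise $p_i(s_j) \in V(B_{j'})$ for some $j' \ne j$, and then $B \cup \{(p_i(s_j), s_j)\}$ would be a rainbow branching of size $n - c + 1$ (the added arc merges $B_j$ and $B_{j'}$ into a single arborescence rooted at $s_{j'}$), contradicting the maximality of $|B|$.

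Using rigidity, the plan is to derive a contradiction by a careful counting argument. For each $j \in [c]$, the number of unused colors $i$ with $r_i \ne s_j$ is $|U| - \rho_U(s_j)$, writing $\rho_U(s_j) = |\{i \in U : r_i = s_j\}|$, and each such color contributes a distinct arc entering $s_j$ whose tail lies in $V(B_j) \setminus \{s_j\}$. Summing over $j$, the total count of such (color, head) pairs is at least $c \cdot |U| - \sum_j \rho_U(s_j) \ge (c-1)|U|$. On the other hand, this count is bounded by the total number of arcs of $G$ entering $\{s_1, \ldots, s_c\}$, namely $\sum_j (k - \rho(s_j)) \le ck - \sum_j \rho(s_j)$. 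Combining with $|U| = k - n + c$ and with $\sum_j \rho(s_j) \ge \sum_j \rho_B(s_j) + \sum_j \rho_U(s_j)$ where $\rho_B(s_j) \le |B|=n-c$, I aim to turn these inequalities into a bound on $k$ that contradicts $k \ge 2n - 3$, respectively $k \ge 2n - 4$.

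The main obstacle will be closing the counting under the tight bound $k = 2n - 4$, particularly in the case $c = 2$: here the pigeonhole has no slack and a refined structural argument is needed. My plan for that case is to track, for each $i \in U$, the \emph{first entry} crossing arc of $A_i$ into each $V(B_{j'})$ with $j' \ne j(i)$ (where $j(i)$ is the index with $r_i \in V(B_{j(i)})$); by rigidity, this arc must enter at a vertex different from $s_{j'}$. Using these crossings as targets of a swap move on $B$ (replacing the current incoming arc of the entry vertex by the unused-color arc) either strictly enlarges $B$ directly or shifts its color set so that the resulting maximum rainbow branching has all multi-roots confined to the same component, at which point an appeal to Theorem~\ref{thm:two_multiroots} or an application of Lemma~\ref{lem:greedy} to the restricted setting yields the desired rainbow spanning arborescence, contradicting the assumption that $B$ was not spanning.
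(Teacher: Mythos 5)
Your approach is genuinely different from the paper's (which proceeds by induction on $n$ via a maximal rainbow \emph{arborescence} $\tilde{B}$, exploits that all unused colors have roots in $V \setminus \tilde{V}$ and restrict to arborescences there, recurses, and for $k = 2n - 4$ falls back to Theorem~\ref{thm:two_multiroots}), but unfortunately your central counting argument is vacuous and the proposal does not constitute a proof.

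Concretely: with a maximum rainbow branching $B$ with $c$ components, $|U| = k - (n-c)$, your lower bound on arcs-into-roots from unused colors is $c|U| - \sum_j \rho_U(s_j)$, and your upper bound (all arcs into the roots) is $ck - \sum_j \rho(s_j)$. Writing $\rho(s_j) = \rho_U(s_j) + \rho_{[k]\setminus U}(s_j)$ and rearranging, the comparison reduces to
$$\sum_{j} \rho_{[k]\setminus U}(s_j) \ \le\ c\,(k - |U|) \ =\ c\,(n-c),$$
which is automatically true since the left side cannot exceed $|[k]\setminus U| = n - c$ and $c \ge 1$. So no matter how large $k$ is, this bookkeeping never contradicts $c \ge 2$; the rigidity observation is correct but too local (it only constrains the single arc of $A_i$ entering each root $s_j$, not how $A_i$ interacts with the rest of $B$). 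The subsequent plan for $c = 2$, $k = 2n-4$—tracking first-entry arcs and performing swaps—is described only at the level of intent: the swap you describe keeps the number of components at $c$, and the claim that iterating it either augments $B$ or funnels all multi-roots into one component is asserted, not argued. There is also a mismatch in how you would invoke the supporting results: Lemma~\ref{lem:greedy} needs a rainbow \emph{arborescence} containing all multi-roots, which a multi-component branching does not provide. The missing ingredient is precisely the paper's recursive structure: once a maximal rainbow arborescence $\tilde{B}$ is fixed, every unused color's arborescence restricts to an arborescence on $V' = V\setminus\tilde V$, and there are at least $2|V'|-1$ of them, so one can solve a strictly smaller instance on $V'$ and then extend greedily; the $k = 2n-4$ case needs the additional step that failure forces a configuration with at most two multi-roots, handled by Theorem~\ref{thm:two_multiroots}.
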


\begin{proof}
    We prove the first claim by induction on $n$.
    The base case is $n = 2$, and it is trivial.
    Suppose that $n \ge 3$.
    Let $\tilde{A}$ be an inclusion-wise maximal rainbow arborescence in $G$ and $\tilde{V}$ be its vertex set; that is, $\tilde{A}$ is a rainbow arborescence on $\tilde{V}$ such that for any color $i \in [k]$ with $\tilde{A} \cap A_i = \emptyset$ and any arc $(u, v) \in A_i$, we have $u \not\in \tilde{V}$ or $v \in \tilde{V}$.
    If $\tilde{V} = V$, then we have obtained a desired arborescence.
    Otherwise, for every color $i \in [k]$ with $\tilde{A} \cap A_i = \emptyset$, the root of $A_i$ must be in $V' \coloneqq V \setminus \tilde{V}$, and the restriction $A'_i$ of $A_i$ to $V \setminus \tilde{V}$ is also an arborescence.
    The number of such colors is
    \[k' \coloneqq k - (|\tilde{V}| - 1) \ge 2n - |\tilde{V}| - 2.\]
    As $\tilde{V} \neq \emptyset$, we obtain $k' \ge 2(n - |\tilde{V}|) - 1 > 2|V'| - 3$.
    Thus, by induction hypothesis if $|V'| \ge 2$ and trivially if $|V'| = 1$, there exists a rainbow arborescence $A'$ on $V'$ only using these $k'$ colors.
    Also, since $k' \ge n - 1$ as $|\tilde{V}| \le n - 1$, starting with an arborescence $A'$ on $V'$, we can obtain a rainbow arborescence $A$ on $V$ by adding outgoing arcs of unused ones among the $k'$ colors one-by-one (as they have roots in $V'$).

    Finally, we verify the second claim.
    Suppose that $n \ge 3$ and $k = 2n - 4$ (note that $k \ge n - 1$).
    If we fail the induction proof above, then we have $k' < n - 1$, which implies $|\tilde{V}| = n - 1$ and $k' = n - 2$.
    In this case, let $r$ be the unique vertex in $V'$, and then we can construct a rainbow arborescence $A'$ rooted at $r$ with $|A'| = k' = n - 2$ by adding outgoing arcs of the $k'$ colors one-by-one.
    If we can add one more arc of an unused color, then we obtain a desired arborescence.
    Otherwise, all the $k - k' = n - 2$ unused colors have the same root, which is a unique vertex not spanned by $A'$.
    Thus, $G$ has at most two multi-roots, and Theorem~\ref{thm:two_multiroots} completes the proof.
\end{proof}

As a corollary, we get a counterpart of the result of Cameron and Wanless~\cite{cameron2005covering}, but for arborescences. 

\begin{cor}\label{cor:relax1}
    Let $n\geq 3$ and $G$ be the disjoint union of $n - 2$ spanning arborescences $A_1, A_2, \dots, A_{n-2}$ on $V$.
    Then, $G$ has a spanning arborescence using at most two arcs from each $A_i$.
\end{cor}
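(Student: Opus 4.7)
The plan is to reduce Corollary~\ref{cor:relax1} to Theorem~\ref{thm:relax1} by a simple doubling trick. I would construct an auxiliary digraph $G'$ on the same vertex set $V$ as the disjoint union of $2(n-2) = 2n-4$ spanning arborescences, taking two copies of each $A_i$ and assigning each copy a distinct new color. Parallel arcs of different colors are explicitly allowed in the paper's setting, so $G'$ is a valid instance.

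Since $n \ge 3$, we have $2n - 4 \ge n - 1$, hence the parameter $k \coloneqq 2n - 4$ meets the hypothesis of Conjecture~\ref{conj:2}. By the second (and stronger) assertion of Theorem~\ref{thm:relax1}, $G'$ contains a rainbow spanning arborescence $B'$, that is, a spanning arborescence on $V$ using exactly one arc from each of the $2n-4$ color classes of $G'$.

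I would then project $B'$ back to $G$ by forgetting the copy index on each arc, obtaining a subgraph $B$ of $G$. Two properties need to be verified: (i) $B$ is a spanning arborescence on $V$, and (ii) $|B \cap A_i| \le 2$ for every $i \in [n-2]$. Property (ii) is immediate, since $B'$ picks at most one arc from each of the two copies of $A_i$, yielding at most two arcs of $A_i$ in $B$. For (i), note that in the arborescence $B'$ every non-root vertex has exactly one incoming arc; in particular, two parallel arcs in $G'$ (the two copies of a single underlying arc of $G$) cannot both belong to $B'$, as they would give two incoming arcs to the same head. Hence the projection is injective on arcs, and $B$ inherits the arborescence structure of $B'$.

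There is essentially no obstacle once the doubling trick is set up; the only point to check is that the projection preserves the arborescence structure, which follows from the in-degree constraint as above. The argument is short because all the work has already been done in Theorem~\ref{thm:relax1}.
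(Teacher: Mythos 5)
Your proposal is correct and is essentially the same argument as the paper's: double each $A_i$ to get $2n-4$ colored arborescences, apply the $k \ge 2n-4$ case of Theorem~\ref{thm:relax1}, and project the rainbow arborescence back by identifying each copy arc with its original. Your extra remark that the projection is injective on arcs (since two parallel copies in $B'$ would violate the in-degree bound at their common head) is a small but worthwhile detail that the paper leaves implicit.
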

\begin{proof}
    For each arborescence $A_i$, make its copy $A'_i$.
    Then, by Theorem~\ref{thm:relax1} (with $k = 2n - 4$), there exists a spanning arborescence $A'$ on $V$ using at most one arc from each $A_i$ and $A'_i$.
    Thus, one can obtain a desired spanning arborescence $A$ from $A'$ by replacing each copy arc from $A'_i$ with its original in $A_i$.
\end{proof}

\subsection{Relaxing the size of output arborescence}
\label{sec:colorfularb}

The results of Aharoni, Kotlar, and Ziv~\cite{aharoni2013independent,aharoni2016rainbow} and of Kotlar and Ziv~\cite{kotlar2015rainbow} both imply that if $G$ is the disjoint union of $n-1$ spanning arborescences, then it has a rainbow branching of size $\lfloor n/2\rfloor$ -- possibly consisting of several arborescences. We prove a strengthening of this observation by showing that the branching in fact can be chosen to be a single arborescence.

\begin{thm}\label{thm:relax2}
    Conjecture~\ref{conj:3} is true when $k \le \lfloor n/2 \rfloor$.
\end{thm}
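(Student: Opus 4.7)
The plan is to prove the result by induction on $k$, with base case $k=1$ being trivial (any single arc is a rainbow arborescence of size~$1$).

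For the inductive step, the inductive hypothesis yields a rainbow arborescence $B'$ of size $k-1$ on a vertex set $\tilde V \subseteq V$ with $|\tilde V|=k$; let $\tilde r$ denote its root. I would extend $B'$ by one arc via the following three-case analysis. \emph{Case 1:} if some unused color $i$ has its root in $\tilde V$, then $A_i$, being spanning, contains an arc from $\tilde V$ to $V \setminus \tilde V$, providing the desired extension. \emph{Case 2:} if some unused color $i$ contains an arc $(w,\tilde r) \in A_i$ with $w \in V \setminus \tilde V$, adding this arc yields a rainbow arborescence of size $k$ rooted at $w$. \emph{Case 3 (stuck):} neither of the above applies, so for every unused color $i$, the root of $A_i$ lies in $V \setminus \tilde V$, and the tail $u_i$ of the arc of $A_i$ entering $\tilde r$ lies in $\tilde V \setminus \{\tilde r\}$.

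In Case 3, the hypothesis $k \le \lfloor n/2 \rfloor$ is pivotal: the number of unused colors is $n-k \ge k > k-1 = |\tilde V \setminus \{\tilde r\}|$, so by pigeonhole on the parents $u_i$ of $\tilde r$ in the unused arborescences, two distinct unused colors $i_1, i_2$ share the same parent $u^* \in \tilde V \setminus \{\tilde r\}$; in particular $(u^*,\tilde r)$ appears in both $A_{i_1}$ and $A_{i_2}$ as differently colored arcs. I would then perform a swap: insert $(u^*,\tilde r)$ of color $i_1$ into $B'$ (creating a cycle along the $B'$-path from $\tilde r$ to $u^*$), and remove a well-chosen arc of $B'$ on this cycle to obtain a new rainbow arborescence $B''$ of size $k-1$ on $\tilde V$ but with a different root and a slightly modified color palette. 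The freshly unused color (freed by the removal) together with the still-unused color $i_2$ should then enable Case~1 or Case~2 to apply to $B''$.

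The main obstacle is verifying that some choice of arc to remove in Case~3 always produces an extendable $B''$. In the tightest subcase one must argue that, by walking down the $B'$-path from $\tilde r$ to $u^*$ and trying each candidate swap arc (or iterating the swap operation), one eventually produces a configuration whose freed color is rooted in $\tilde V$ or whose new root admits an incoming arc from outside $\tilde V$ in an unused color. The combinatorial slack $n-k \ge k$ guaranteed by the hypothesis, possibly combined with a secondary pigeonhole on the entry points of unused arborescences into $\tilde V$, should ensure that the stuck conditions cannot be maintained through all such swaps, thereby forcing an extension.
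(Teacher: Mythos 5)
Your proposal takes a genuinely different route from the paper, but it contains a real gap in Case~3 that you yourself flag, and that gap is not a minor verification issue --- it is where the heart of the proof lies.

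Your pigeonhole on the parents $u_i$ of $\tilde r$ gives two unused colors $i_1,i_2$ with a common parent $u^*$, but this observation is weaker than what is needed. After inserting $(u^*,\tilde r)$ of color $i_1$ and deleting one arc of the resulting cycle, the new root is some vertex $w$ on the $B'$-path from $\tilde r$ to $u^*$. For Case~2 to apply you would need some unused color to have an arc into $w$ from \emph{outside} $\tilde V$, and for Case~1 you would need an unused color rooted \emph{inside} $\tilde V$; neither is implied by $A_{i_2}$ containing the parallel arc $(u^*,\tilde r)$, nor by the freed color. The difficulty persists under iteration: nothing in your setup controls where the parents of the successive new roots lie, and the number of swap configurations is finite, so ``eventually escaping'' needs an explicit potential or monotonicity argument that is missing.

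The paper's proof supplies exactly the structure your approach lacks. Rather than examining only the arc of each unused $A_i$ entering $\tilde r$, it uses the full $A_i$-path from its root $r_i\notin\tilde V$ to $\tilde r$, and takes its final segment $P_i$: a path whose first vertex is outside $\tilde V$ and whose remaining vertices (ending at $\tilde r$) are inside $\tilde V$. It then greedily builds a \emph{rainbow in-arborescence} rooted at $\tilde r$ from the $\bigcup_i P_i$: for each unused color in turn, one can pick an arc of $P_i$ with head in the current vertex set and tail outside, enlarging the in-arborescence by one vertex. The same count you invoked ($|J| = n-k \ge k = |\tilde V|$) now shows this in-arborescence must leave $\tilde V$, producing a rainbow path $P$ from a vertex outside $\tilde V$ to $\tilde r$. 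The augmentation is then a single bulk exchange $(\tilde B \cup P)\setminus Q$, where $Q$ is the set of $\tilde B$-arcs whose heads lie on $P$; one checks $|Q|=|P|-1$, and in-degree and acyclicity arguments show this is a larger rainbow arborescence. Your single-arc swap is in fact the first step of this bulk exchange when $u^* = p_{m-1}$, but you have no way to choose $u^*$ or the subsequent arcs correctly without first constructing the whole path $P$. In short: replace the pigeonhole-on-parents plus blind swapping with the explicit construction of a rainbow escape path via the in-arborescence argument, and the proof goes through.
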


\begin{proof}
    As in the proof of Theorem~\ref{thm:relax1}, let $\tilde{A}$ be an inclusion-wise maximal rainbow arborescence in $G$ and $\tilde{V}$ be its vertex set.
    If $|\tilde{A}| \ge \lfloor n/2 \rfloor$, then we have obtained a desired arborescence.
    We show that otherwise we can augment $(\tilde{A}, \tilde{V})$ by exchanging several arcs.

    Let $\tilde{r}$ be the root of $\tilde{A}$, and $J$ be the set of unused colors.
    Since $\tilde{A}$ is inclusion-wise maximal, for every color $i \in J$, the root of $A_i$ is not in $\tilde{V}$, and the restriction of $A_i$ to $V\setminus \tilde{V}$ is also an arborescence.
    Thus, each $A_i$ $(i \in J)$ includes a path $P_i$ satisfying the following three conditions:
    \begin{itemize}\itemsep0em
        \item the root is not in $\tilde{V}$,
        \item all other vertices are in $\tilde{V}$, and
        \item the leaf is $\tilde{r}$.
    \end{itemize}

\begin{cl}\label{cl:rainbow_path}
    The disjoint union $\bigcup_{i \in J} P_i$ contains a rainbow path $P$ satisfying the same three conditions.
\end{cl}
\begin{proof}
    
    It suffices to show that $\bigcup_{i \in J} P_i$ has a rainbow in-arborescence $A'$ on $V' \not\subseteq \tilde{V}$ rooted at $\tilde{r}$, where an \emph{in-arborescence} is a digraph which becomes an arborescence by flipping the direction of all arcs (in other words, the underlying graph is a tree on $V'$ and the out-degree in $V' \setminus \{\tilde{r}\}$ is one).
    It is clear that such an $A'$ must contain a desired rainbow path $P$.
    
    We prove this constructively.
    Initially, let $A' = \emptyset$ and $V' = \{\tilde{r}\}$.
    For each color $i \in J$ (in any order), we do the following update.
    If $V' \not\subseteq \tilde{V}$ at some point, then we have obtained a desired in-arborescence $A'$.
    Otherwise, $P_i$ has an arc $e = (u, v)$ such that $u \notin V'$ and $v \in V'$ (as $P_i$ starts outside of $\tilde{V} \supseteq V'$ and ends at $\tilde{r} \in V'$); we add $e$ to $A'$ and $u$ to $V'$.
    By the assumptions, we have
    \[|\tilde{V}| = |\tilde{A}| + 1 \le \lfloor n/2 \rfloor \le n - 1 - (\lfloor n/2 \rfloor - 1) \le n - 1 - |\tilde{A}| = |J|.\]
    This implies $|V'| = |J| + 1 > |\tilde{V}|$ at the end, and hence $V' \not\subseteq \tilde{V}$.
\end{proof}
    Let $P$ be a path obtained by Claim~\ref{cl:rainbow_path}, and let $Q$ be the set of arcs $e \in \tilde{A}$ such that the head of $e$ is in $P$.
    Then, $|Q| = |P| - 1$ as $\tilde{r}$ is the root of the arborescence $\tilde{A}$, and we can indeed augment $\tilde{A}$ to $(\tilde{A} \cup P) \setminus Q$.
    This completes the proof of the theorem.
\end{proof}

\section{Special cases}
\label{sec:solved}

As a step toward verifying Conjecture~\ref{conj:1}, we show that the statement is true under several additional assumptions: when each $A_i$ is a path or a star (Section~\ref{sec:path}), when there are at most two vertices that are roots of multiple input arborescences (Section~\ref{sec:tworoots}), and when the underlying graph is a tree (Section~\ref{sec:tree}). Our main result is the verification of the conjecture in the seemingly simple setting where the underlying graph is a cycle (Section~\ref{sec:cycle}). However, this apparent simplicity is misleading, as the proof is highly nontrivial; for this reason, it is deferred to Section~\ref{sec:proof}.

\subsection{Preparations}
\label{sec:lemmas}

For a digraph $G$ and vertex $v$, we write $\delta_G^{\mathrm{in}}(v)$ and $\delta_G^{\mathrm{out}}(v)$ for the sets of arcs \emph{entering} and \emph{leaving} $v$, respectively. For an arborescence and a vertex that is not the root, the tail of its unique incoming arc is its \emph{parent}, and a vertex with no outgoing arc is a \emph{leaf}. Special cases include \emph{paths}, which have exactly one leaf, and \emph{stars}, in which every non-root vertex is a leaf.

For the proofs, we need some technical lemmas. We first observe that we can get rid of stars among the arborescences $A_i$. However, since the reduction may change the structure of other arborescences, we should be careful when we use this lemma in an argument based on induction.

\begin{lem}\label{lem:star}
    Let $G$ be the disjoint union of $n - 1$ spanning arborescences $A_1, A_2, \dots, A_{n-1}$ on $V$, and suppose that $A_{n-1}$ is a star rooted at $r$.
    For each $i \in [n-2]$, we define an arborescence $A'_i$ on $V - r$ as follows.
    \begin{itemize}\itemsep0em
        \item If the root of $A_i$ is $r$, then choose any vertex $r' \in V - r$ with $(r, r') \in \delta^{\mathrm{out}}_{A_i}(r)$ and let $A'_i \coloneqq \{ (r', v) \colon (r, v) \in \delta^{\mathrm{out}}_{A_i}(r),\ v \neq r' \} \cup (A_i \setminus \delta^{\mathrm{out}}_{A_i}(r))$.
        \item Otherwise, let $r'$ be the parent of $r$ on $A_i$ and $A'_i \coloneqq \{ (r', v) \colon (r, v) \in \delta^{\mathrm{out}}_{A_i}(r) \} \cup (A_i \setminus (\delta^{\mathrm{out}}_{A_i}(r) \cup \{(r', r)\}))$.
    \end{itemize}
    In particular, if $A_i$ is a star, then $A'_i$ is also a star.
    If there exists a rainbow arborescence on $V - r$ with respect to $(A'_1, A'_2, \dots, A'_{n-2})$, then this is true on $V$ with respect to $(A_1, A_2, \dots, A_{n-1})$.
\end{lem}

\begin{proof}
Let $A'$ be a rainbow arborescence on $V - r$ with respect to $(A'_1, A'_2, \dots, A'_{n-2})$, and define a subgraph $\hat{A}$ as follows.
For each $i \in [n-2]$, let $\{e'_i\} = A' \cap A'_i$.
If $e'_i \in A_i$ as it is, then $\hat{A}$ contains the arc $e'_i$.
Otherwise, by definition, $e'_i = (r', v)$ for some $v \in V - r$, where $r'$ is the root of $A'_i$, and it corresponds to two arcs in $A_i$ as follows.
\begin{itemize}\itemsep0em
    \item If the root of $A_i$ is $r$, then $e'_i$ corresponds to $(r, r'), (r, v) \in A_i$.
    \item Otherwise, $e'_i$ corresponds to $(r', r), (r, v) \in A_i$.
\end{itemize}
In either case, $\hat{A}$ contains both of the two corresponding arcs.

Suppose that $|\hat{A}| = n - 2$.
In this case, $\hat{A} = A'$, and we obtain a desired arborescence on $V$ just by adding an arc in the star $A_{n-1}$ from $r$ to the root of $A'$.

Otherwise, we have $|\hat{A}| \ge n - 1$.
Then, $|\hat{A} \cap A_i| = 2$ for at least one index $i \in [n-2]$, and $\hat{A} \cap A_i$ contains an arc $(r, v)$ such that $e'_i = (r', v)$.
Let $A$ be the subgraph of $G$ obtained from $\hat{A}$ by removing the other arc for each of such indices $i$ ($(r, r')$ or $(r', r)$ depending on the root of $A_i$).
Then, $A$ is disjoint only from $A_{n-1}$ and consists of two disjoint arborescences, one of whose root is $r$.
Thus, by adding an arc in the star $A_{n-1}$ from $r$ to the other root, we obtain a rainbow spanning arborescence.
\end{proof}

For each vertex $v \in V$, let $\rho(v) \coloneqq |\{ i \in [n-1] \colon \text{the root of $A_i$ is $v$}\}|$.
A vertex $v$ is called a \emph{multi-root} if $\rho(v) \ge 2$, and a \emph{non-root} if $\rho(v) = 0$.
We then observe the following sufficient condition for the existence of a rainbow spanning arborescence.

\begin{lem}\label{lem:greedy}
    Let $G$ be the disjoint union of $n - 1$ spanning arborescences $A_1, A_2, \dots, A_{n-1}$ on $V$.
    If $G$ has a rainbow arborescence $\tilde{A}$ on $\tilde{V} \subseteq V$ such that $\tilde{V}$ contains all the multi-roots, then $G$ has a rainbow spanning arborescence.
\end{lem}

\begin{proof}
    We show that we can augment $(\tilde{A}, \tilde{V})$ to $(\tilde{A}', \tilde{V}')$ with $\tilde{V}' = \tilde{V} + v$ for some $v \in V \setminus \tilde{V}$ unless $\tilde{V} = V$.
    Suppose that there exists a color $i \in [n-1]$ with $|\tilde{A} \cap A_i| = 0$ and $r_i \in \tilde{V}$, where $r_i$ is the root of $A_i$.
    Then, $A_i$ has an arc $e = (u, v)$ with $u \in \tilde{V}$ and $v \not\in \tilde{V}$, and hence we just define $\tilde{A}' \coloneqq \tilde{A} + e$ and $\tilde{V} \coloneqq \tilde{V} + v$.
    Thus, we may assume that $|\tilde{A} \cap A_i| = 1$ for any color $i$ with $r_i \in \tilde{V}$.
    This implies $\sum_{v \in \tilde{V}} \rho(v) \le |\tilde{A}| = |\tilde{V}| - 1$.
    Since $\sum_{v \in V}\rho(v) = |V| - 1$ and $\rho(v) \le 1$ for every $v \in V \setminus \tilde{V}$, we also have $\sum_{v \in \tilde{V}} \rho(v) \ge |\tilde{V}| - 1$.
    Thus, the equality holds, and $|\tilde{A} \cap A_i| = 1$ if and only if $r_i \in \tilde{V}$.

    Let $\tilde{r}$ be the root of $\tilde{A}$.
    Pick an unused color $i$, and let $e = (v, \tilde{r})$ be the unique arc in $\delta_{A_i}^{\mathrm{in}}(\tilde{r})$.
    If $v \not\in \tilde{V}$, then we define $\tilde{A}' \coloneqq \tilde{A} + e$ and $\tilde{V}' \coloneqq \tilde{V} + v$.
    Otherwise, let $f = (u, v)$ be the unique arc in $\delta_{\tilde{A}}^{\mathrm{in}}(v)$ and we first update $\tilde{A}$ to $\tilde{A} + e - f$.
    After this update, $\tilde{A}$ is still a rainbow arborescence on $\tilde{V}$, whose root is $v$, and satisfies $|\tilde{A} \cap A_j| = 0$ and $r_j \in \tilde{V}$ for the color $j$ with $f \in A_j$.
    Thus, $A_j$ has an arc $e' = (u', v')$ with $u' \in \tilde{V}$ and $v' \not\in \tilde{V}$, and we can define $\tilde{A}' \coloneqq \tilde{A} + e'$ and $\tilde{V}' \coloneqq \tilde{V} + v'$.
\end{proof}

\subsection{Each arborescence being a path or a star}
\label{sec:path}

We first verify the conjecture when each arborescence is a path.

\begin{thm}\label{thm:path}
    Conjecture~\ref{conj:1} is true when each $A_i$ is a path.
\end{thm}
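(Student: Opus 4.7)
My plan is induction on $n$, using Lemma~\ref{lem:greedy} as the main extension tool. The base case $n = 2$ is immediate: $G = A_1$ is a single-arc spanning arborescence, trivially rainbow.

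For $n \ge 3$, let $M \subseteq V$ denote the set of multi-roots. The easy subcase is $M = \emptyset$: the identity $\sum_v \rho(v) = n-1$ together with $\rho(v) \le 1$ for every $v$ forces the existence of a unique non-root vertex $v^*$, and taking $\tilde V = \{v^*\}$ with $\tilde B = \emptyset$ trivially satisfies $\tilde V \supseteq M = \emptyset$, so Lemma~\ref{lem:greedy} produces a rainbow spanning arborescence. When $M \ne \emptyset$, the plan is to construct an initial rainbow arborescence $\tilde B$ on some $\tilde V \supseteq M$ and then invoke Lemma~\ref{lem:greedy}. To build $\tilde B$, I would exploit the fact that each multi-root $m$ is the root of $\rho(m) \ge 2$ spanning paths and therefore has at least two outgoing arcs of distinct colors in $G$; starting from the trivial arborescence on $\{m_0\}$ for a chosen $m_0 \in M$, I would iteratively grow $\tilde V$ by adding arcs of unused colors, steering the growth so as to absorb the remaining multi-roots as quickly as possible.

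The hardest part, and the step that requires genuinely new work beyond Lemmas~\ref{lem:star} and \ref{lem:greedy}, is guaranteeing that this build-up phase actually terminates with $M \subseteq \tilde V$. The counting identity underpinning Lemma~\ref{lem:greedy} relies on $\rho(v) \le 1$ for every $v \notin \tilde V$, which is precisely the condition $M \subseteq \tilde V$; thus during the build-up phase that lemma is not directly applicable, and a straightforward greedy could conceivably fill $\tilde V$ with non-multi-root vertices while leaving some $m \in M$ outside. I expect to handle this via a double-counting or pigeonhole argument on the positions of uncovered multi-roots along the unused paths---using that each uncovered $m \in M$ with $\rho(m) \ge 2$ both emits many unused outgoing arcs and is visited as a non-root by many other paths---together with a Lemma~\ref{lem:greedy}-style rerooting swap that, whenever a direct extension toward an uncovered multi-root fails, swaps in the unique arc of an unused color entering the current root so as to realign $\tilde B$ and free up a color whose path reaches an uncovered $m$.
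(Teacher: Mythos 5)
Your proposal has a genuine gap: the case $M \neq \emptyset$ is not actually proven, and this is the entire content of the theorem. You correctly identify that Lemma~\ref{lem:greedy} reduces everything to finding a rainbow arborescence $\tilde{B}$ on some $\tilde{V}$ covering all multi-roots, and you correctly identify that producing such a $\tilde{B}$ is the crux, but what you offer for it is only a strategy sketch (``double-counting or pigeonhole argument \dots together with a Lemma~\ref{lem:greedy}-style rerooting swap \dots''). No invariant is stated, no termination or progress measure is established, and no reason is given why the steering ``so as to absorb the remaining multi-roots as quickly as possible'' cannot get stuck. The accounting inside Lemma~\ref{lem:greedy} (namely $\sum_{v\in\tilde V}\rho(v)\le|\tilde V|-1$ and $\rho\le 1$ off $\tilde V$) is precisely what breaks down before $\tilde V$ contains all multi-roots, so you cannot borrow that argument during the build-up phase, and you have not supplied a replacement. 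Moreover, you never actually use the hypothesis that each $A_i$ is a path; any correct proof along your lines must exploit that, since the conjecture is open in general.

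The paper's proof is quite different and does not rely on induction or on Lemma~\ref{lem:greedy} at all. It works directly and globally: it first builds, greedily, a set $S$ of $n-1$ arcs, one per color, with in-degree one everywhere except at a single vertex $r$, together with the crucial \emph{path-specific} property that each chosen arc lies on the segment of its path $A_i$ strictly between $r$ and the leaf. Among all such $(S,r)$, it then picks one minimizing the potential $\sum_i|A_i(r,S)|$, the total distance along each path from $r$ to the head of the selected arc. If $S$ were not an arborescence it would contain a cycle avoiding $r$; each cycle arc can be shifted one step toward $r$ along its own path (possible by the ``between $r$ and the leaf'' property), strictly decreasing the potential while preserving all invariants, a contradiction. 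This is where the path hypothesis is used essentially, and it yields a self-contained argument with no induction and no need to first capture all multi-roots. You would need to either complete your build-up argument with a concrete progress measure (and use the path structure somewhere), or adopt a direct construction of this kind.
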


\begin{proof}
Since each $A_i$ is a path, it has the unique leaf.
We first claim the existence of a pair of a subgraph $S$ and a vertex $r\in V$ satisfying the following three conditions:
\begin{enumerate}[label = (\arabic*)] \itemsep0em
\item $|S\cap A_i|=1$ for each $i\in [n-1]$, \label{it:1}
\item $|\delta_S^{\mathrm{in}}(v)|=1$ for each $v\in V\setminus \{r\}$ and $|\delta_S^{\mathrm{in}}(r)|=0$, and \label{it:2} 
\item for each $i\in [n-1]$, the unique arc in $S\cap A_i$ lies between $r$ and the leaf on the path $A_i$. \label{it:3}
\end{enumerate}
Indeed, such $(S, r)$ can be found by the following procedure. Initially, let $S$ be empty. Then, for each $i=1,2,\dots, n-1$, do the following.
Let $v_i$ be the vertex on $A_i$ closest to the leaf among those that have no incoming arcs in $S$ and add to $S$ the arc of $A_i$ entering $v_i$.
Then, the size of the resulting $S$ is $n-1$. Let $r$ be the unique vertex that has no incoming arc in $S$. We see that these $S$ and $r$ satisfy conditions \ref{it:1}--\ref{it:3}.

Among the pairs $(S, r)$ satisfying conditions \ref{it:1}--\ref{it:3}, take the one minimizing $\sum_{i\in [n-1]}|A_i(r,S)|$, where $|A_i(r,S)|$ is the length of the path on $A_i$ from $r$ to the head of the unique arc in $S\cap A_i$.
We show that $S$ is an arborescence, which completes the proof by condition~\ref{it:1}.

Suppose, to the contrary, that $S$ is not so.
Since we have conditions \ref{it:1} and \ref{it:2}, $S$ is not connected, and at least one connected component must contain a cycle.
That is, there is a vertex set $\{u^1, u^2, \dots, u^k\}$ such that $(u^j,u^{j+1})\in S$ for $j\in [k]$, where we let $u^{k+1}=u^1$. Note that $r$ is not contained in this cycle as we have $|\delta_S^{\mathrm{in}}(r)|=0$.
Let $S'$ be a subgraph obtained from $S$ by replacing each arc $(u^j,u^{j+1})$ in the cycle with the arc of the same color that enters $u^j$.
Such an arc does exist because $u^j\neq r$ and we have condition \ref{it:3}.
This $S'$ satisfies conditions \ref{it:1}--\ref{it:3} with the same $r$ while $\sum_{i\in [n-1]}|A_i(r,S')|<\sum_{i\in [n-1]}|A_i(r,S)|$, which contradicts the choice of $S$.
\end{proof}

By combining Theorem~\ref{thm:path} with Lemma~\ref{lem:star}, we get the following corollary.

\begin{cor}\label{cor:path_star}
    Conjecture~\ref{conj:1} is true when each $A_i$ is either a path or a star.
\end{cor}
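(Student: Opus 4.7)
The plan is to argue by induction on $n = |V|$, using Lemma~\ref{lem:star} to peel off stars one at a time and Theorem~\ref{thm:path} to finish once no stars remain. The base case $n = 2$ is trivial: the single arborescence consists of one arc and is itself a rainbow spanning arborescence.

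For the inductive step with $n \ge 3$, suppose the statement holds for smaller vertex sets, and let $G$ be the disjoint union of $n - 1$ spanning arborescences $A_1, \dots, A_{n-1}$, each of which is a path or a star. If none of the $A_i$ is a star, then every $A_i$ is a path, and Theorem~\ref{thm:path} directly yields a rainbow spanning arborescence. Otherwise, reindex so that $A_{n-1}$ is a star rooted at some vertex $r \in V$, and apply Lemma~\ref{lem:star} to produce arborescences $A'_1, \dots, A'_{n-2}$ on $V - r$. Assuming each $A'_i$ is still a path or a star, the inductive hypothesis gives a rainbow arborescence on $V - r$ with respect to $(A'_1, \dots, A'_{n-2})$, and Lemma~\ref{lem:star} lifts it back to a rainbow spanning arborescence on $V$.

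The step requiring verification is that the reduction in Lemma~\ref{lem:star} preserves the property of being a path or a star. Lemma~\ref{lem:star} already records that stars remain stars, so only the case where $A_i$ is a path needs inspection. I plan to handle this by a short case distinction on the position of $r$ on the path $A_i$: if $r$ is the root of $A_i$, then $\delta^+_{A_i}(r)$ consists of a single arc $(r, r')$ and $A'_i$ is $A_i$ with that arc deleted; if $r$ is the leaf, then $A'_i$ is $A_i$ with the unique arc $(r', r)$ removed; and if $r$ is internal, with unique arcs $(r', r)$ and $(r, v)$, then $A'_i$ replaces these two arcs by the shortcut $(r', v)$. In each case $A'_i$ is again a path on $V - r$.

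No serious obstacle arises beyond this routine check; the substantive work has already been done by Theorem~\ref{thm:path} and Lemma~\ref{lem:star}, and the induction packages them together.
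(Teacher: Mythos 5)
Your proof is correct and follows essentially the same route as the paper: induction on $n$, invoking Theorem~\ref{thm:path} when no star remains and otherwise peeling off a star via Lemma~\ref{lem:star}, with the same three-way case analysis (root, leaf, internal vertex of the path) to verify that the reduction keeps paths paths.
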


\begin{proof}
We prove this by induction on $n$.
If all $A_i$ are paths, just apply Theorem~\ref{thm:path}.
Otherwise, without loss of generality, we may assume that $A_{n-1}$ is a star.
Then, by Lemma~\ref{lem:star}, we can reduce the instance by removing the star and its root.
By the induction hypothesis, it suffices to show that the reduction given in Lemma~\ref{lem:star} preserves the condition that an arborescence is a path.

Suppose that $A_i$ $(i \in [n-2])$ is a path.
If the root of $A_i$ is $r$, then $A'_i = A_i \setminus \{(r, v)\}$, where $v$ is the next vertex of $r$ on the path $A_i$; thus, this is indeed a path.
Otherwise, $A'_i = A_i \setminus \{(r', r)\}$ if $r$ is the leaf, and $A'_i = (A_i \setminus \{(r', r), (r, v)\}) \cup \{(r', v)\}$ otherwise, where $r'$ is the parent of $r$ and $v$ is the next vertex of $r$ on the path $A_i$; thus, this is also a path.
\end{proof}

\subsection{At most two multi-roots}
\label{sec:tworoots}

Our next result shows that the conjecture also holds if the number of multi-roots is small.
Recall that an in-arborescence is a digraph which becomes an arborescence by flipping the direction of all arcs.

\begin{thm}\label{thm:two_multiroots}
    Conjecture~\ref{conj:1} is true when there exist at most two multi-roots.
\end{thm}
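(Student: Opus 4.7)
The plan is to invoke Lemma~\ref{lem:greedy}, by which it suffices to exhibit a rainbow arborescence $\tilde{B}$ on some $\tilde{V} \subseteq V$ that contains every multi-root. When the number of multi-roots is zero or one, the trivial arborescence on a single vertex (the multi-root itself, if one exists) already does the job. So the substantive case is exactly two multi-roots $u$ and $v$ with $a \coloneqq |\rho(u)| \ge 2$ and $b \coloneqq |\rho(v)| \ge 2$, and the task reduces to producing a rainbow arborescence containing both.

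For that case, each color $i \in \rho(u)$ yields a unique directed $u$-$v$ path $P_i$ inside $A_i$, and each $j \in \rho(v)$ yields a unique directed $v$-$u$ path $Q_j$ inside $A_j$, giving $a + b \ge 4$ distinctly-colored directed paths between $u$ and $v$. I would adapt the exchange argument of Theorem~\ref{thm:path}: from each $i \in \rho(u)$ select one arc on $P_i$, from each $j \in \rho(v)$ select one arc on $Q_j$, and consider a potential equal to the total prefix-length of the chosen arcs along their defining paths (distance from $u$ along $P_i$, from $v$ along $Q_j$). Among all selections whose arc set is a branching with $u$ and $v$ in one common weakly connected component, take a minimizer of this potential; the initial candidate (so that the minimum is over a nonempty set) is produced by the same back-to-front greedy scan used in the proof of Theorem~\ref{thm:path}.

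The crux is to prove acyclicity of a minimizer. If a cycle existed, it would be disjoint from $\{u, v\}$ (since $u$ has no incoming arc in any color of $\rho(u)$, and symmetrically for $v$); then each arc of the cycle could be replaced by the arc immediately preceding it on its defining path, which strictly decreases the potential while preserving both the rainbow property and the weak connection between $u$ and $v$. Once the minimizer is acyclic, the weakly connected component through $u$ and $v$ is a rainbow arborescence containing both, and Lemma~\ref{lem:greedy} finishes the proof. The \emph{in-arborescence} notion, introduced immediately before the theorem, is likely used to unify the two color groups: reversing every arc of each $Q_j$ ($j \in \rho(v)$) turns those contributions into an in-arborescence sinked at $u$, so the candidate becomes an out-arborescence at $u$ fused with an in-arborescence at $u$, and the cycle-exchange can be executed uniformly on the two halves.

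The principal obstacle I anticipate is the interaction between the $\rho(u)$- and $\rho(v)$-halves during the exchange: a cycle passing through $u$ or $v$ (possible because $u$ may be an internal vertex of some $Q_j$, or $v$ of some $P_i$) cannot be rerouted in the naive way, and the replacement must be arranged so that the $u$-$v$ weak connection is not severed. Colors outside $\rho(u) \cup \rho(v)$ need not appear in $\tilde{B}$ at all, since Lemma~\ref{lem:greedy} absorbs them afterwards, so they can be ignored in the core argument.
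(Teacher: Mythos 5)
Your first step---reducing via Lemma~\ref{lem:greedy} to exhibiting a rainbow arborescence containing both multi-roots $u$ and $v$---matches the paper exactly, and the one-multi-root case is handled the same way. The divergence comes in producing that arborescence, and your plan has a genuine gap at its crux. You claim any cycle in a candidate selection is disjoint from $\{u,v\}$ ``since $u$ has no incoming arc in any color of $\rho(u)$,'' but this only rules out arcs of $P_i$ entering $u$; the last arc of each $Q_j$ (with $j\in\rho(v)$) \emph{does} enter $u$, and the first arc of each $P_i$ leaves $u$, so a cycle can pass through $u$ (symmetrically through $v$). For such a cycle, the Theorem~\ref{thm:path}-style replacement fails outright: the arc of $P_i$ leaving $u$ has no predecessor on $P_i$ to swap in. You flag this under ``principal obstacle,'' but it is not a peripheral technicality---it is precisely the case the potential argument must handle, and no fix is given. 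Two further unresolved points: you never establish that the minimization is over a nonempty set (the back-to-front greedy on just the $a+b$ paths need not keep $u$ and $v$ in one weak component), and you give no argument that the simultaneous cycle replacement preserves the ``$u,v$ weakly connected'' constraint.

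The paper's argument is structurally different and sidesteps all of this. It never restricts to the paths $P_i,Q_j$ or to the colors $\rho(u)\cup\rho(v)$, and it uses no potential function or cycle-breaking. Instead it grows two arc-disjoint rainbow in-arborescences $\tilde B_1,\tilde B_2$, rooted (as sinks) at $x_1$ and $x_2$, using arcs of \emph{any} colors. While $\tilde V_1\cap\tilde V_2=\emptyset$, a simple count ($|\tilde B_1|+|\tilde B_2|\le n-2$) gives an unused color $i$; its root lies outside at least one $\tilde V_j$, so $A_i$ has an arc entering $\tilde V_j$ from outside, which extends $\tilde B_j$. Once $\tilde V_1\cap\tilde V_2\ne\emptyset$, pick $\tilde r$ in the intersection minimizing the distance to $x_1$ inside $\tilde B_1$; the two paths from $\tilde r$ to $x_1$ and to $x_2$ meet only at $\tilde r$ and together form the desired rainbow arborescence containing both multi-roots. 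Your mention of in-arborescences gestures in the right direction, but the ``reverse every arc of $Q_j$'' move and the attempt to fuse out- and in-arborescences at a single vertex do not recover this construction; the key idea you are missing is to grow two separate in-arborescences from both multi-roots simultaneously, freely over all colors, and let the pigeonhole/counting do the work.
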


\begin{proof}
When there exists at most one multi-root, the claim follows from Lemma~\ref{lem:greedy} by taking $\tilde{A} = \emptyset$ and $\tilde{V} = \{r\}$ for a vertex $r \in V$ with $\rho(r) = \max_{v \in V} \rho(v)$.
The remaining case is when there exist exactly two multi-roots, say $x_1$ and $x_2$.
By Lemma~\ref{lem:greedy} again, it suffices to show that $G$ has a rainbow arborescence $\tilde{A}$ on some $\tilde{V}$ with $\{x_1, x_2\} \subseteq \tilde{V}$.

In order to obtain such $(\tilde{A}, \tilde{V})$, we show that $G$ has a pair of arc-disjoint in-arborescences $\tilde{B}_1$ and $\tilde{B}_2$ on $\tilde{V}_1$ and $\tilde{V}_2$, respectively, satisfying the following three conditions:
\begin{enumerate}[label = (\arabic*)] \itemsep0em
    \item $\tilde{B}_j$ is rooted at $x_j$ for each $j \in [2]$,\label{it:m1}
    \item $\tilde{B}_1 \cup \tilde{B}_2$ is rainbow, and \label{it:m2}
    \item $\tilde{V}_1 \cap \tilde{V}_2 \neq \emptyset$. \label{it:m3}
\end{enumerate}
It is easy to see that $\tilde{B}_1 \cup \tilde{B}_2$ includes a desired rainbow arborescence $\tilde{A}$ as it contains arc-disjoint paths from each $\tilde{r} \in \tilde{V}_1 \cap \tilde{V}_2$ to $x_1$ and $x_2$.

Observe that if $|\tilde{B}_1 \cup \tilde{B}_2| = n - 1$ in addition to conditions \ref{it:m1}--\ref{it:m2}, then condition \ref{it:m3} is automatically satisfied by the pigeonhole principle (as $|\tilde{V}_1| + |\tilde{V}_2| = |\tilde{B}_1| + |\tilde{B}_2| + 2 = n + 1 > |V|$).
Also, $\tilde{B}_1 = \tilde{B}_2 = \emptyset$, $\tilde{V}_1 = \{x_1\}$, and $\tilde{V}_2 = \{x_2\}$ satisfy conditions \ref{it:m1}--\ref{it:m2}.
Thus, it suffices to show that we can augment at least one of the pairs $(\tilde{B}_1, \tilde{V}_1)$ and $(\tilde{B}_2, \tilde{V}_2)$ satisfying conditions \ref{it:m1}--\ref{it:m2} as long as $\tilde{V}_1 \cap \tilde{V}_2 = \emptyset$.

Suppose that $\tilde{V}_1 \cap \tilde{V}_2 = \emptyset$.
Then, there exists a color $i \in [n-1]$ such that $(\tilde{B}_1 \cup \tilde{B}_2) \cap A_i = \emptyset$.
As $\tilde{V}_1 \cap \tilde{V}_2 = \emptyset$, at least one $\tilde{V}_j$ does not contain the root of $A_i$.
Thus, $A_i$ contains some arc $e = (u, v) \in A_i$ with $u \not\in \tilde{V}_{j}$ and $v \in \tilde{V}_j$, and we can indeed augment $(\tilde{B}_j, \tilde{V}_j)$ to $(\tilde{B}_j + e, \tilde{V}_j + u)$.
\end{proof}

As a corollary, we get that the conjecture is true for graphs on a small number of vertices.

\begin{cor}\label{cor:six}
    Conjecture~\ref{conj:1} is true when $n \le 6$.
\end{cor}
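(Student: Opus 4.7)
The plan is to reduce the corollary directly to Theorem~\ref{thm:two_multiroots} by a one-line counting argument on multi-roots. First I would recall that each of the $n-1$ arborescences $A_i$ has exactly one root, so
\[
\sum_{v \in V} |\rho(v)| = n - 1.
\]
Since every multi-root $v$ contributes $|\rho(v)| \ge 2$ to this sum, the number $m$ of multi-roots must satisfy $2m \le n - 1$, i.e.\ $m \le \lfloor (n-1)/2 \rfloor$.

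For $n \le 6$ this yields $m \le \lfloor 5/2 \rfloor = 2$, so the hypothesis of Theorem~\ref{thm:two_multiroots} is automatically fulfilled and the conclusion follows. I do not anticipate any real obstacle here: the step is a pure pigeonhole bound on the total number of root labels, and the threshold $n \le 6$ is precisely the largest value of $n$ for which this bound still forces $m \le 2$, so the corollary falls out without any additional work.
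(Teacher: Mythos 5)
Your proof is correct and is essentially the same counting argument as the paper's: with $n-1 \le 5$ colors and each multi-root accounting for at least two of them, there can be at most two multi-roots, so Theorem~\ref{thm:two_multiroots} applies.
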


\begin{proof}
    When $n \le 6$, there are at most five colors.
    Thus, there are at most two multi-roots, and Theorem~\ref{thm:two_multiroots} completes the proof.
\end{proof}

\begin{rem}\label{rem:8}
We verified using a computer that Conjecture~\ref{conj:1} is indeed true when $n \le 8$.
Note that the number of possible instances is naively estimated as $\left(n^{n-1}\right)^{n-1} = n^{(n-1)^2}$, since the number of spanning trees on $n$ labeled vertices is $n^{n-2}$ by Cayley's theorem~\cite{arthur1888theorem,moon1967various} and then that of spanning arborescences is $n^{n-1}$ (including $n$ possible choices of the root).
Thus, even when $n = 7$, the literally brute-force search checks $7^{36} \approx 2.65 \times 10^{30}$ instances, which is impractical.
With the aid of theory, in particular using Theorem~\ref{thm:two_multiroots} and Lemma~\ref{lem:greedy}, we can employ several pruning strategies, which enable us to complete up to $n = 8$.

The high level idea is the following.
Let $V = [n]$.
First, we fix the roots of all colors; let $r_i$ denote the root of color $i \in [n-1]$.
By symmetry and Theorem~\ref{thm:two_multiroots}, it suffices to consider the case $(r_1, r_2, \dots, r_6) = (1, 1, 2, 2, 3, 3)$ when $n = 7$ and the cases $(r_1, r_2, \dots, r_7) = (1, 1, 1, 2, 2, 3, 3), (1, 1, 2, 2, 3, 3, 4)$ when $n = 8$.
Then, starting with empty branchings $A'_i$ $(i \in [n-1])$, we grow them to spanning arborescences with their roots $r_i$ at the end in a DFS manner; that is, in each step, we choose a color $i$ and a vertex $v \neq r_i$ that has no incoming arc of color $i$, and add a possible arc to $A'_i$ that enters $v$.
During this process, we maintain the family of possible rainbow branchings that consist of arcs already added, and if there is a rainbow (not necessarily spanning) arborescence containing all the multi-roots (the three vertices $1, 2, 3$ when $n = 7, 8$), then we prune the current branch since we can obtain a rainbow spanning arborescence by Lemma~\ref{lem:greedy} no matter what arcs will be added hereafter.
In addition, if the current branch is completely symmetric with another branch that has already been completed (i.e., there exists a pair of a permutation of the vertex indices and a permutation of the color indices making the two branches coincide), then we prune it.
By incorporating a heuristic strategy for deciding which pair of a color and a vertex should be chosen in the next DFS step to make pruning occur as early as possible, the total computation is completed within a few minutes for $n = 8$.
The source code is available in \cite{yycode}.
\end{rem}

\subsection{Underlying graph is a tree}
\label{sec:tree}

In this section, we show that the conjecture is true if the underlying graph is a tree, i.e., each arborescence is obtained by orienting the edges of the same tree.

\begin{thm}\label{thm:tree}
    Conjecture~\ref{conj:1} is true when the underlying graph of $G$ is a tree.
\end{thm}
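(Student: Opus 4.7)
The plan is to induct on $n$. The base case $n = 2$ is immediate: $T$ is a single edge and the unique arborescence $A_1$ itself is rainbow spanning.

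For the inductive step, I would peel off a carefully chosen leaf of $T$. The key observation is that, since each $A_i$ is an orientation of the same underlying tree $T$, for any leaf $\ell$ of $T$ with parent $p$, the unique arc of $A_i$ incident to $\ell$ is $(p, \ell)$ when $r_i \neq \ell$ and $(\ell, p)$ when $r_i = \ell$. I would first pick a leaf $\ell$ satisfying $|\rho(\ell)| < n - 1$. Such a leaf exists, because $T$ has at least two leaves while the condition $|\rho(v)| = n - 1$, meaning \emph{every} color is rooted at $v$, can hold for at most one vertex. Then I would fix an arbitrary color $i^* \in [n-1] \setminus \rho(\ell)$ (which exists by the choice of $\ell$).

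Next, I would consider the reduced instance on $V \setminus \{\ell\}$ with the $n - 2$ arborescences $\{A_j - \ell : j \in [n-1] \setminus \{i^*\}\}$. A routine check shows that each $A_j - \ell$ is a spanning arborescence of $V \setminus \{\ell\}$: removing the leaf $\ell$ together with its unique incident arc in $A_j$ yields an arborescence rooted at $r_j$ when $r_j \neq \ell$, and rooted at $p$ when $r_j = \ell$. The arborescences remain pairwise arc-disjoint, and the underlying graph of their union is $T - \ell$, still a tree. Hence the induction hypothesis applies and produces a rainbow spanning arborescence $B'$ on $V \setminus \{\ell\}$.

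Finally, I would take $B \coloneqq B' \cup \{(p, \ell)\}$, where the added arc is drawn from $A_{i^*}$ (which contains $(p, \ell)$ precisely because $r_{i^*} \neq \ell$). Adding $(p, \ell)$ to $B'$ only raises the in-degree of $\ell$ from $0$ to $1$, so $B$ is a spanning arborescence on $V$ with the same root as $B'$, and it uses each color in $[n-1]$ exactly once. The only subtle step is the initial choice of a leaf $\ell$ with $|\rho(\ell)| < n - 1$; beyond that the argument is a clean reduction to a smaller tree, so I do not foresee a genuine obstacle.
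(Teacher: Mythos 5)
Your proposal is correct and follows essentially the same route as the paper's proof: peel off a leaf $\ell$ of the tree together with one color $i^*$ not rooted at $\ell$, apply the induction hypothesis to the reduced instance on $V \setminus \{\ell\}$, and re-attach the arc $(p,\ell)$ of color $i^*$. The paper phrases the choice in the reverse order — it first fixes the color (w.l.o.g.\ $A_{n-1}$) and then picks a leaf $v$ of that arborescence; since the underlying graph is a tree, a leaf of $A_{n-1}$ is automatically a leaf of $T$ that is not the root of $A_{n-1}$, which sidesteps the small existence argument you make about finding $\ell$ with $|\rho(\ell)| < n-1$. Both are the same argument; the paper's formulation is slightly more economical, but yours is equally valid.
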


\begin{proof}
We prove the statement by induction on $n$.
    The base case is $n = 2$, and it is trivial.
    Suppose that $n \ge 3$.
    Let $v \in V$ be a leaf of $A_{n-1}$ and $e = (u, v) \in A_{n-1}$ be its incoming arc.
    For each $i \in [n-2]$, let $A'_i$ be the arborescence on $V - v$ obtained from $A_i$ by removing the unique arc incident to $v$.
    By induction hypothesis, there exists an arborescence $A'$ on $V - v$ such that $|A' \cap A'_i| = 1$ for all $i \in [n-2]$.
    Thus, we obtain a desired rainbow arborescence $A = A' + e$ on $V$.
\end{proof}

\subsection{Underlying graph is a cycle}
\label{sec:cycle}

In terms of the structure of the graph, one of the simplest open cases of the rainbow arborescence conjecture is when the underlying undirected graph is a cycle (with possible parallel edges, which are inevitable as there are $(n-1)^2$ arcs).
As the main result of this paper, we show that the conjecture is true in this case. Interestingly, although the structure seems simple, the conjecture turned out to be much more difficult to prove than in cases settled in the previous subsections; we give the proof in Section~\ref{sec:proof}. 

\begin{thm}\label{thm:cycle}
    Conjecture~\ref{conj:1} is true when the underlying graph is a cycle. 
\end{thm}

Let us note that, in contrast to the general case (Theorem~\ref{thm:NP-hard}), deciding whether there is a rainbow arborescence with a given root $r$ is polynomial-time solvable on a cycle, since there are $n$ possible spanning arborescences rooted at $r$, and we can check for all of them whether they are rainbow or not via bipartite matching. This, together with Theorem~\ref{thm:cycle}, implies that if the underlying undirected graph is a cycle, then a rainbow arborescence can be found in polynomial time by testing each vertex as the root.

By combining Theorem \ref{thm:cycle} with the argument for the case where the underlying graph is a tree (i.e., the proof of Theorem~\ref{thm:tree}), one can easily see that the conjecture is also true when the underlying graph is a \emph{pseudotree} (a connected graph having at most one cycle).

\begin{cor}\label{cor:uni}
Conjecture~\ref{conj:1} is true when the underlying graph is a pseudotree.
\end{cor}
\begin{proof}
     Suppose that $v$ is a leaf of the underlying graph. If $v$ has an incoming arc of some color $c$, then one can reduce the instance by removing $v$ and $c$ as in the proof of Theorem~\ref{thm:tree}; otherwise, $v$ is the root of all input arborescences and then one can construct a rainbow spanning arborescence greedily, as described at the beginning of Section \ref{sec:hardness}.
\end{proof}

In addition, we also show that Theorem \ref{thm:cycle} implies an interesting new result on systems of distinct representatives of a family of intervals on the cycle. 
Let $C=(V,E)$ be a cycle of length $n$. An edge set $I\subseteq E$ is called an \emph{interval} of $C$ if it is connected, that is, $I$ is a path or $I \in\{\emptyset,E\}$.

Given a family $E_1,\dots,E_k$ of (not necessarily distinct) subsets of $E$, a \emph{system of distinct representatives} for $E_1,\dots,E_k$ is an edge set $S \subseteq E$ of size $k$ and a bijection $\sigma\colon S \to [k]$ such that $e\in E_{\sigma(e)}$ for every $e \in S$.
For two sets $X$ and $Y$, their symmetric difference is denoted by $X \triangle Y = (X \setminus Y) \cup (Y \setminus X)$. 

\begin{cor}\label{cor:representative}
    Let $C=(V,E)$ be a cycle of length $n$, and let $I_1,\dots, I_n$ be arbitrary (not necessarily distinct) intervals of $C$. Then there exists an interval $J$ of $C$ such that the family $I_1 \triangle J,\dots, I_n \triangle J$ has a system of distinct representatives.
\end{cor}
\begin{proof}
    We have to show that there exist an interval $J$ of $C$ and a bijection $\sigma\colon E \to [n]$ such that $e\in I_{\sigma(e)}\triangle  J$ for every $e \in E$. In order to reduce this to Theorem \ref{thm:cycle}, we consider the equivalent directed version where $C=(V,E)$ is a directed cycle and $I_1,\dots,I_n$ are directed intervals. We define the bidirected cycle $G=(V,E \cup F)$ where $F$ consists of the reversed edges of $E$, and we call $E$ the set of \emph{clockwise edges} and $F$ the set of \emph{anticlockwise edges}. For each interval $I_i$ ($i\in [n-1]$), we define a spanning arborescence $A_i$ of $G$ the following way. 
    \begin{itemize}\itemsep0em
        \item If $I_i=\emptyset$, then $A_i$ is an arbitrary anticlockwise path of length $n-1$.
        \item If $I_i=E$, then $A_i$ is an arbitrary clockwise path of length $n-1$.
        \item If $I_i$ is a path, then $A_i$ is the spanning arborescence of $G$ whose clockwise path is $I_i$. 
    \end{itemize}
    Note that in each case the clockwise path of $A_i$ is included in $I_i$ and the reverse of the anticlockwise path of $A_i$ is included in $E \setminus I_i$.
    By Theorem \ref{thm:cycle}, we can pick arcs $a_i \in A_i$ ($i \in [n-1]$) such that $A=\{a_1,\dots,a_{n-1}\}$ is a spanning arborescence of $G$. Let $P \subseteq E$ be the clockwise path of $A$ and let $P'\subseteq E$ be the reverse of the anticlockwise path of $A$.

    Let $e^*=E\setminus (P\cup P')$. If $e^* \in I_n$, then let $J=P'$. If $e^* \notin I_n$, then let $J=P'+e^*$. It is easy to check that $J$ is an interval of $C$.

    We define the bijection $\sigma\colon E \to [n]$ as follows: $\sigma(e^*)=n$, and if $e=a_i$ or $e$ is the reverse of $a_i$  for some $i\in [n-1]$, then $\sigma(e)=i$. This is indeed a bijection, so it remains to show that 
    $e\in I_{\sigma(e)}\triangle  J$ for every $e \in E$. For $e=e^*$ this follows easily from the definition of $J$, because $e^*\in J$ if and only if $e^* \notin I_n$. If $e=a_i$ for some $i \in [n-1]$, then $e$ is in the clockwise paths of both $A_i$ and $A$, so $e \in I_i \setminus J$. If $e$ is the reverse of $a_i$ for some $i \in [n-1]$, then the reverse of $e$ is in the anticlockwise paths of both $A_i$ and $A$, so $e \in J \setminus I_i$.    
\end{proof}

\begin{rem}
The statement of Corollary~\ref{cor:representative} can be interpreted in terms of a matching on special bipartite graphs as follows.
Let $H = (\mathcal{I}, E; B)$ be a bipartite graph defined by $\mathcal{I} = \{ I_i \colon i \in [n] \}$ and $B = \{ (I_i, e) \colon e \in I_i \in \mathcal{I} \}$.
Then, $H$ is \emph{balanced} (i.e., $|\mathcal{I}| = |E|$), and since each $I_i$ is an interval, $H$ is  \emph{circular convex} (which is the definition of such a bipartite graph \cite{liang1995circular}). %
Also, let $H^c = (\mathcal{I}, E; B^c)$ be the balanced circular convex bipartite graph obtained as the complement of $H$, i.e., $B^c = (\mathcal{I} \times E) \setminus B$.
For a subset $E' \subseteq E$, let $H[E']$ and $H^c[E']$ denote the subgraphs of $H$ and of $H^c$, respectively, induced by $\mathcal{I} \cup E'$.
Under this rephrasing, the theorem claims that, for any balanced circular convex bipartite graph $H$ with any consistent cyclic ordering of $E$, there exists an interval $J \subseteq E$ (with respect to the cyclic ordering) such that the disjoint union of $H[J]$ and $H^c[E \setminus J]$ admits a perfect matching.

We also remark that we cannot replace ``cycle'' in the statement of Corollary~\ref{cor:representative} by ``path''. Indeed, if we consider a path $P=e_1e_2e_3$ of length 3, and $I_i=\{e_2\}$ for $i=1,2,3$, then there is no subpath $J$ of $P$ such that the family $I_1 \triangle J, I_2\triangle J, I_3 \triangle J$ has a system of distinct representatives. 
\end{rem}

\section{Proof of Theorem~\ref{thm:cycle}}
\label{sec:proof}

The proof strategy is very different from the proofs in the previous section. We consider the maximal directed rainbow paths on the cycle, and show that one of them can be extended to a rainbow spanning arborescence (Theorem \ref{thm:main2}). As observed in the previous section, once existence is known, one can test each vertex as a potential root in polynomial time to obtain a rainbow arborescence; hence we do not aim to provide an algorithmic proof, and our proof of existence is non-constructive. We assume for contradiction that no maximal rainbow path is extendible to a rainbow spanning arborescence, and examine the structural implications of this. On a very high level, the proof works as follows: we prove several structural inequalities based on our assumption, and get to a point where we can show that all of these inequalities must hold with equality. This puts severe restrictions on the possible configuration of the input arborescences, and after some additional work we can show that these restrictions cannot be satisfied at all. In the following, we describe the proof in detail, starting with the basic definitions and notation.

Let $(V,E)$ be a directed cycle of length $n$, where $V=\{v_1,\dots,v_n\}$, $e_j=v_jv_{j+1}$ ($j\in[n-1]$), and $e_n=v_nv_1$. We will usually consider the indices modulo $n$, i.e., $v_{n+j}=v_j$ and $e_{n+j}=e_j$. We denote the reverse arc of $e_j$ by $f_j$, that is, $f_j=v_{j+1}v_j$. The arcs $e_j$ will be called \emph{clockwise arcs}, while the arcs $f_j$ are \emph{anticlockwise arcs}. The graph $G= (V,E \cup F)$ is the bidirected cycle of length $n$, where $F = \{ f_j \colon j \in [n] \}$.
For a set $X$ of arcs, we denote by $\underlying{X}$ the set of corresponding edges in the underlying undirected graph; when $X = \{e\}$ for a single arc $e$, we denote by $\underlying{e}$ the singleton of the underlying edge of $e$ or the edge itself (depending on the context). A spanning arborescence $A$ of $G$ can be characterized by a pair of its root and the (undirected) edge that it does not use in either direction.
The latter will be called the \emph{missing edge} of $A$, which is sometimes referred to as the \textit{corresponding arc} of any direction.

   We use $A_1,\dots,A_{n-1}$ for the input arborescences.
   A clockwise path $P=(v_j,e_j,v_{j+1},\dots,e_{k-1},v_k)$ is called \emph{feasible} if there exists an injective function $c\colon\{j,\dots,k-1\} \to [n-1] $ such that $e_{\ell}\in A_{c(\ell)}$ for every $\ell \in \{j,\dots,k-1\}$.
Analogously, an anticlockwise path $Q=(v_j,f_{j-1},v_{j-1},\dots,f_{k},v_k)$ is \emph{feasible} if there exists an injective function $c\colon\{k,\dots,j-1\} \to [n-1] $ such that $f_{\ell}\in A_{c(\ell)}$ for every $\ell \in \{k,\dots,j-1\}$. For a clockwise path $P$, let $A(P)$ denote the unique spanning arborescence whose clockwise path is $P$. Similarly, for an anticlockwise path $Q$, let $A(Q)$ denote the unique spanning arborescence whose anticlockwise path is $Q$.

For an arc set $H \subseteq E \cup F$, let 
    \begin{align*}
    C(H) &\coloneqq \{c \in [n-1]\colon H \cap A_c \neq \emptyset\},\\
    \gamma(H) &\coloneqq |H|-|C(H)|.
    \end{align*}
We can observe that the set function $\gamma$ is supermodular. An arc set $B\subseteq E \cup F$ is a \emph{blocking set} if $\gamma(B)>0$. A blocking set $B$ is called a $\emph{clockwise blocking set}$ if $B\subseteq E$, an $\emph{anticlockwise blocking set}$ if $B \subseteq F$, and a $\emph{mixed blocking set}$ if $B \cap E \neq \emptyset$ and $B\cap F \neq \emptyset$.     
By Hall's theorem, a spanning arborescence $A$ of $G$ is rainbow if and only if there is no blocking set $B \subseteq A$. Furthermore, a clockwise path $P$ (respectively, anticlockwise path $Q$) is feasible if and only if there is no clockwise blocking set $B \subseteq P$ (respectively, no anticlockwise blocking set $B \subseteq Q$).
By the following lemma, we can assume that there exists no blocking singleton in any direction. 

We prove Theorem~\ref{thm:cycle} by induction on $n$.
The base case $n = 2$ is trivial, and in what follows we assume $n \ge 3$. 

\begin{lem}\label{lem:blocking_singleton}
    If there exists a blocking singleton in either direction, then the theorem holds. 
\end{lem}

\begin{proof}
By symmetry, suppose that there exists a clockwise blocking singleton $\{e_j = v_jv_{j+1}\}$.
If $e_j$ is the missing edge of all the colors, then the inclusionwise maximal clockwise feasible path $P$ ending at $v_j$ can be extended to the rainbow spanning arborescence $A(P)$ (just by adding anticlockwise arcs of the remaining colors greedily).

Otherwise, some color $c$ has the anticlockwise arc 
$f_j$.
In this case, we construct a smaller instance by removing the color $c$ and the vertex $v_j$, and by adding an anticlockwise arc $f' = v_{j+1}v_{j-1}$ to each of the remaining colors having both $f_{j-1}$ and $f_j$.
According to the induction hypothesis, the reduced instance has a rainbow spanning arborescence $A'$.
From $A'$, we can easily obtain a rainbow spanning arborescence $A$ of $G$ by adding $f_j \in A_c$, where if $A'$ contains the new arc $f'$ of color $c' \neq c$, then it should be replaced by $f_{j-1} \in A_{c'}$.
\end{proof}

From now on, we assume that there exists no blocking singleton in any direction.
Let $P_1,\dots,P_s$ be the set of inclusionwise maximal feasible clockwise paths (ordered according to the clockwise cyclic order of their first vertices), and let $Q_1,\dots,Q_t$ be the set of inclusionwise maximal feasible anticlockwise paths (ordered according to the anticlockwise cyclic order of their first vertices).
We consider the indices of those paths modulo $s$ and $t$, respectively.
We prove the following strengthening of Theorem \ref{thm:cycle} (under the assumption). 

\begin{thm} \label{thm:main2}
    Suppose that there exists no blocking singleton.
    Then, there exists $\ell \in [s]$ such that $A(P_{\ell})$ is rainbow, or there exists $\ell \in [t]$ such that $A(Q_{\ell})$ is rainbow.
\end{thm}
    
\begin{proof}
Since there exists no blocking singleton, each arc forms a feasible path of length $1$.
Thus, as each $P_\ell$ is maximal, we have $\bigcup_{\ell \in [s]} P_\ell = E$ and $s \ge 2$. 
We can also assume $|P_{\ell}|\leq n-2$ for every $\ell \in [s]$, because otherwise $P_{\ell}$ itself is a rainbow spanning arborescence (and we can make similar assumptions for $Q_1,\dots,Q_t$). Consider $P_{\ell}$ and $P_{\ell+1}$ for some index $\ell \in [s]$.
Let $e_j$ be the arc just after the last arc of $P_{\ell}$, and let $e_{j'}$ be the arc just before the first arc of $P_{\ell+1}$.
We then have $e_{j'} \in P_{\ell} \setminus P_{\ell + 1}$ and $e_j \in P_{\ell + 1} \setminus P_\ell$, and in particular, $e_j\neq e_{j'}$.

We show that there is a clockwise blocking set $X \subseteq \{e_{j'},\dots,e_j\}$ that contains both $e_{j'}$ and $e_j$. Indeed, $\{e_{j'},\dots,e_j\}$ cannot be a feasible path, since $P_{\ell}$ and $P_{\ell+1}$ were consecutive inclusionwise maximal feasible paths; this means that a clockwise blocking set $X \subseteq \{e_{j'},\dots,e_j\}$ must exist. This $X$ must contain both $e_{j'}$ and $e_j$, because otherwise $P_{\ell}$ or $P_{\ell+1}$ would not be feasible.

Let $X_{\ell}$ be an inclusionwise minimal blocking set with the above property. We denote by $\overline{X_{\ell}}$ the shortest subpath of $P_\ell+e_j$ that contains $X_{\ell}$; that is, $\overline{X_{\ell}}$ is the clockwise path from $e_{j'}$ to $e_j$.
We call $e_{j'}$ the \emph{first arc} of $X_{\ell}$, and call $e_j$ the \emph{last arc} of $X_{\ell}$. See Figure \ref{fig:xell} for an illustration.

\begin{figure}[h]
    \centering    
    \begin{subfigure}[t]{0.48\linewidth}
    \centering
    \includegraphics[width=0.7\linewidth]{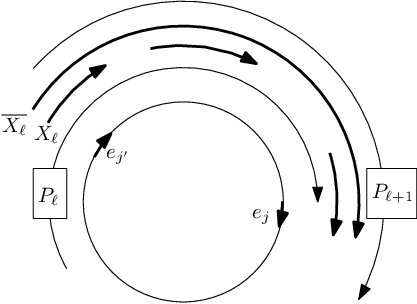}
    \caption{The positions of the arcs $e_j, e_{j'}$ and the arc sets $X_{\ell},\overline{X_{\ell}}$ with respect to the paths $P_{\ell}, P_{\ell+1}$.}
    \label{fig:xell_left}
    \end{subfigure}
    \hfill
    \begin{subfigure}[t]{0.48\linewidth}
        \centering
        \includegraphics[width=0.7\linewidth]{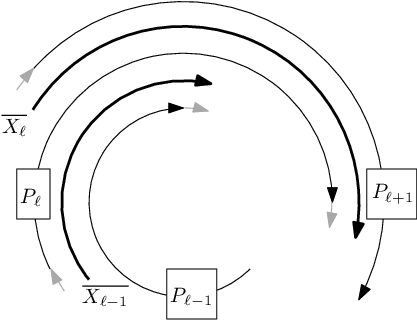}
    \caption{The positions of $X_{\ell-1}$ and $X_{\ell}$ with respect to the paths $P_{\ell-1}, P_{\ell}, P_{\ell+1}$ (the gray arcs are the first and last arcs of $X_{\ell-1}$ and $X_{\ell}$).}
    \label{fig:xell_right}
    \end{subfigure}
    \caption{Illustration of the definition of $X_\ell$ and $\overline{X_\ell}$.}
    \label{fig:xell}
\end{figure}

Using a similar argument for the anticlockwise paths $Q_{\ell}$ and $Q_{\ell+1}$, we can define an inclusionwise minimal anticlockwise blocking set $Y_{\ell}$ for $\ell \in [t]$.
We denote by $\overline{Y_{\ell}}$ the shortest subpath of $Q_\ell+f_j$ that contains $Y_{\ell}$, where $f_j$ is the arc after the last arc of $Q_{\ell}$.
We will sometimes refer to $X_\ell$ or $Y_\ell$ as another symbol, say $Z$, and then we also denote by $\overline{Z}$ the corresponding path $\overline{X_\ell}$ or $\overline{Y_\ell}$, respectively.
We also consider the indices of $X_\ell$ and $Y_\ell$ modulo $s$ and $t$, respectively, as with those of $P_\ell$ and $Q_\ell$.

We observe two properties on the positional relations of colors and blocking sets.

\begin{cl}\label{cl:disjoint}
  Suppose that there exists $A_c$ that is disjoint from both $X_{i}$ and $Y_{\ell}$. Then one of the following five possibilities holds:
  \begin{itemize}\itemsep0em
      \item $\underlying{\overline{X_i}}$ and $\underlying{\overline{Y_{\ell}}}$ are disjoint;
      \item $\underlying{\overline{X_i}} \cap \underlying{\overline{Y_{\ell}}}=\{e\}$, where $e$ underlies the first arcs of both $X_i$ and $Y_{\ell}$;
      \item $\underlying{\overline{X_i}} \subseteq \underlying{\overline{Y_{\ell}}}$, and the intersection of
      $\underlying{X_i}$ and $\underlying{Y_{\ell}}$ is at most one edge (which, if exists, underlies the first arc of $X_i$);
      \item $\underlying{\overline{X_i}} \supseteq \underlying{\overline{Y_{\ell}}}$, and  the intersection of
      $\underlying{X_i}$ and $\underlying{Y_{\ell}}$ is at most one edge (which, if it exists, underlies the first arc of $Y_\ell$);
      \item $\underlying{\overline{X_i}}$ and $\underlying{\overline{Y_{\ell}}}$ are co-disjoint, and the intersection of
      $\underlying{X_i}$ and $\underlying{Y_{\ell}}$ is at most one edge.
  \end{itemize}
\end{cl}
\begin{proof}
    If $|\underlying{X_i} \cap \underlying{Y_{\ell}}| \geq 2$, then there is an edge in the intersection that is not the missing edge of $A_c$, so $A_c$ intersects $X_i$ or $Y_{\ell}$, contradicting the assumption of the claim.

    Suppose that $|\underlying{X_i} \cap \underlying{Y_{\ell}}| \leq 1$, but none of the possibilities in the claim holds. 
Then, there are four indices $j_0, j_1, j_2, j_3$
such that
\begin{itemize}\itemsep0em
    \item $e_{j_0} \notin \overline{X_i}$, $f_{j_0} \notin \overline{Y_{\ell}}$
    \item $e_{j_1} \in \overline{X_i}$, $f_{j_1} \notin \overline{Y_{\ell}}$
     \item $e_{j_2} \notin \overline{X_i}$, $f_{j_2} \in \overline{Y_{\ell}}$
     \item $e_{j_3} \in \overline{X_i}$, $f_{j_3} \in \overline{Y_{\ell}}$, and $e_{j_3}$ is not the first arc of $X_i$ or $f_{j_3}$ is not the first arc of $Y_{\ell}$.
\end{itemize}

In this case, it can be checked by case analysis (see Figure \ref{fig:no_disjoint_arb}) that $A_c$ must contain at least one of the four (not necessarily distinct in the undirected sense) %
arcs given by the first and last arcs of $X_i$ and $Y_{\ell}$. 
\end{proof}

\begin{figure}[h]
    \centering
    \includegraphics[width=0.95\linewidth]{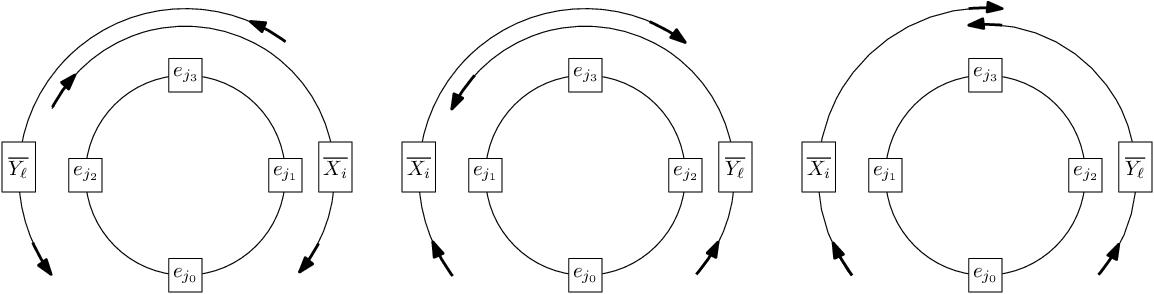}
    \caption{The three possible configurations of the first and last arcs of $X_i$ and $Y_{\ell}$ (in bold). In all cases, no spanning arborescence can be disjoint from all four of them.}
    \label{fig:no_disjoint_arb}
\end{figure}

\begin{cl}\label{cl:3blocking}
 If $\underlying{\overline{Y_{\ell}}}\subseteq \underlying{\overline{X_i}}$ and $|\underlying{\overline{Y_{\ell'}}} \cap \underlying{\overline{X_i}}|\leq 1$ for some indices $i,\ell,\ell'$, then every $A_c$ intersects at least one of $X_i,Y_{\ell}, Y_{\ell'}$. Similarly, if $\underlying{\overline{X_{\ell}}}\subseteq \underlying{\overline{Y_i}}$ and $|\underlying{\overline{X_{\ell'}}} \cap \underlying{\overline{Y_i}}|\leq 1$ for some indices $i,\ell,\ell'$, then every $A_c$ intersects at least one of $Y_i,X_{\ell}, X_{\ell'}$.
\end{cl}
\begin{proof}
    We prove the first statement; the proof of the second is analogous. Suppose that $A_c$ is disjoint from $X_i$ and $Y_{\ell}$. Since $\underlying{\overline{Y_{\ell}}}\subseteq \underlying{\overline{X_i}}$, the clockwise path of $A_c$ must be a subset of $\overline{X_i}$, and it cannot contain the first and last arcs of $X_i$; the latter implies that the missing edge of $A_c$ also belongs to $\underlying{\overline{X_i}}$. This implies that the anticlockwise path of $A_c$ contains all the reverse arcs of $E \setminus \overline{X_i}$, but then the properties $|\underlying{\overline{Y_{\ell'}}} \cap \underlying{\overline{X_i}}|\leq 1$ and $|Y_{\ell'}|\geq 2$ together imply that $Y_{\ell'}$ contains an anticlockwise arc of $A_c$.
\end{proof}

The next claims are consequences of the supermodularity of $\gamma$.

\begin{cl}\label{cl:capcup} $\gamma(X_{\ell})=1$,
  $\gamma(X_{\ell-1}\cap X_{\ell} )=0$ and $\gamma(X_{\ell-1}\cup X_{\ell} )=2$ for every $\ell \in [s]$. Similarly, $\gamma(Y_{\ell})=1$, $\gamma(Y_{\ell-1}\cap Y_{\ell} )=0$ and $\gamma(Y_{\ell-1}\cup Y_{\ell} )=2$ for every $\ell \in [t]$.
\end{cl}
\begin{proof}
    We prove the first statement; the proof of the second is analogous.
    Let $e_j$ be the first arc of $X_{\ell - 1}$, and let $e_{j'}$ be the last arc of $X_{\ell}$. 
    As $e_j$ is the arc just before the first arc of $P_\ell$ and $e_{j'}$ is the arc just after the last arc of $P_\ell$, the four sets $X_{\ell-1}-e_j$, $X_{\ell}-e_{j'}$, $X_{\ell-1}\cap X_{\ell}$, and $(X_{\ell-1}\cup X_{\ell}) \setminus\{e_j,e_{j'}\}$ are all included in $P_{\ell}$, so they are not blocking sets. It follows the definition of $\gamma$ that $\gamma(X_{\ell-1})=\gamma(X_{\ell})=1$, $\gamma(X_{\ell-1}\cap X_{\ell} ) \leq 0$, and $\gamma(X_{\ell-1}\cup X_{\ell} )\leq 2$. The supermodularity of $\gamma$ then implies the claim.
\end{proof}

\begin{cl}\label{cl:unionmu}
  Let $B_1,\dots,B_q$ be blocking sets such that $B_{i+1} \cap \left(\bigcup_{j=1}^{i} B_{j}\right)$ is not a blocking set for each $i\in [q-1]$. Then $\gamma\left(\bigcup_{i=1}^q B_{i}\right) \geq \sum_{i=1}^q \gamma(B_i)$.  
\end{cl}
\begin{proof}
    The proof is by induction on $q$, the case $q=1$ being trivial. Let $q \geq 2$, and let $B=\bigcup_{i=1}^{q-1} B_{i}$. Then $\gamma(B) \geq \sum_{i=1}^{q-1} \gamma(B_i)$ by induction hypothesis, and $\gamma(B_q \cap B)\leq 0$ by assumption. Therefore, the supermodularity of $\gamma$ implies that $\gamma\left(\bigcup_{i=1}^q B_{i}\right) \geq \sum_{i=1}^q \gamma(B_i)$.
\end{proof}

We will analyze the structure of the arborescences $A(P_{\ell})$ that are blocked by some anticlockwise blocking set. We start with a simple observation.

\begin{cl}\label{cl:Ki}
Suppose that $A(P_{\ell})$ is blocked by some anticlockwise blocking set. Then there exists an index $i$ such that $Y_i$ blocks $A(P_{\ell})$, and furthermore $\overline{Y_i}\subseteq A(P_{\ell})\setminus P_\ell$.     
\end{cl}

\begin{proof}
    Let $Q$ be the anticlockwise path of $A(P_{\ell})$.  
    Let $Q'$ be the longest feasible anticlockwise path ending at the last vertex of $Q$. Since $A(P_\ell)$ is blocked by some anticlockwise blocking set, $Q'$ cannot contain the first vertex of $Q$. There exists $i$ such that $Q_{i+1}$ contains $Q'$ and starts at the same vertex.  Furthermore, $Q_{i}$ cannot contain the last vertex of $Q$ due to the choice of $i$. Thus, $\overline{Y_i} \subseteq Q$ and $Y_i$ blocks $A(P_{\ell})$.
\end{proof}

Let $L$ be the set of indices $\ell$ such that $A(P_{\ell})$ is blocked by some anticlockwise blocking set. For $\ell \in L$, let $Z_{\ell}$ be the set $Y_i$ defined in the proof above 
(the proof also implies that $Z_\ell$ is the set $Y_i$ such that the anticlockwise path from the last vertex of $Y_i$ to the last vertex of $X_\ell$ is shortest). 
Note that $Z_{\ell}=Z_{\ell'}$ is possible for distinct indices $\ell,\ell' \in L$. Note also that $\underlying{\overline{X_{\ell}}}\cap\underlying{\overline{Z_{\ell}}}=\emptyset$, because $\underlying{{\overline{X_{\ell}}\setminus P_{\ell}}}$ is the missing edge of $A(P_{\ell})$, and 
$\overline{Z_{\ell}}\subseteq A(P_\ell)\setminus P_\ell$ by Claim \ref{cl:Ki}. The following lemma is one of the key components of the proof of Theorem \ref{thm:main2}.

\begin{lem}\label{lem:L}
    Let $L$ be defined as above, and let $X^*=\bigcup_{i=1}^s X_i$. Then $\gamma(X^*)=|X^*|-|C(X^*)| \geq |L|$. Furthermore, if $\bigcup_{\ell \in L} X_{\ell} \subsetneq X^*$, then $\gamma(X^*) > |L|$. 
\end{lem}
\begin{proof}
 The idea is to find a vertex $v_j$ that is not in the interior of any $\overline{X_i}$ $(i \in L)$. First, we show how the inequality in the lemma follows from the existence of such a $v_j$. We use Claim \ref{cl:unionmu} with the following parameters: $q=|L|$, and the sets $B_i$ are the sets $X_{\ell}$ ($\ell \in L$) in clockwise order, such that $v_j$ is between $\overline{B_q}$ and $\overline{B_1}$.
 It is easy to see that the conditions in the claim are satisfied due to the minimality of the blocking sets $X_{\ell}$, so we have
 \[\gamma\left({\textstyle\bigcup_{\ell \in L} X_{\ell}}\right)\geq \sum_{\ell \in L}\gamma(X_{\ell})=|L|.\]
 If $\bigcup_{\ell \in L} X_{\ell} \subsetneq X^*$, then we can greedily add additional sets $X_{\ell}$ ($\ell \notin L$) to the union, to finally obtain $\gamma(X^*)=|X^*|-|C(X^*)| > |L|$. 

 It remains to show that there exists a vertex $v_j$ that is not in the interior of any $\overline{X_i}$ $(i \in L)$.
 We may assume $|L| \ge 2$ (otherwise such a $v_j$ obviously exists).
 Let $\ell \in L$ be an index for which the clockwise path from the last vertex of $X_{\ell}$ to the last vertex of $Z_{\ell}$ is shortest. 
Then, let $\ell'\in L$ be the index minimizing the length of the anticlockwise path starting with the reverse of the first arc of $X_{\ell'}$ and ending with the first arc of $Z_{\ell}$ subject to either $\underlying{\overline{X_{\ell'}}}$ and $\underlying{\overline{Z_{\ell}}}$ are disjoint or their only common edge underlies the first arcs of both $X_{\ell'}$ and $Z_{\ell}$ (since $\underlying{\overline{X_\ell}}\cap \underlying{\overline{Z_\ell}}=\emptyset$ for every $\ell \in L$ and the first arcs of $X_i$ $(i \in [s])$ are distinct, such an $\ell' \in L$ uniquely exists). See Figure \ref{fig:mainlemma_a} for an illustration.

\begin{figure}[h]
    \centering
    \begin{subfigure}[t]{0.48\linewidth}
    \centering
    \includegraphics[width=0.55\linewidth]{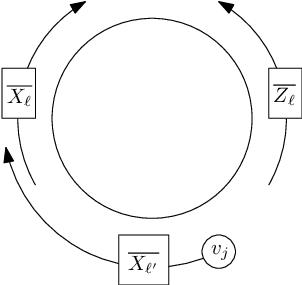}
    \caption{ $\underlying{\overline{Z_{\ell}}} \cap \underlying{\overline{X_{\ell'}}}=\emptyset$.}
    \label{fig:mainlemma_a_left}
    \end{subfigure}
    \hfill
    \begin{subfigure}[t]{0.48\linewidth}
    \centering
    \includegraphics[width=0.55\linewidth]{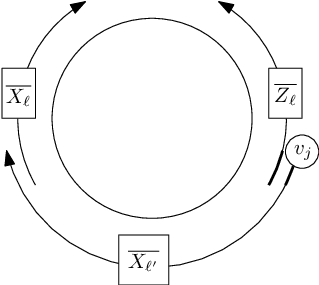}
    \caption{$\underlying{\overline{Z_{\ell}}} \cap \underlying{\overline{X_{\ell'}}}$ is a single edge that underlies the first arcs of both $Z_{\ell}$ and $X_{\ell'}$.}
    \label{fig:mainlemma_a_right}
    \end{subfigure}
    \caption{Possible relative positions of $\overline{Z_{\ell}}$ and $\overline{X_{\ell'}}$.}
    \label{fig:mainlemma_a}
\end{figure}

Let $v_j$ be the first vertex of $X_{\ell'}$; we show that $v_j$ satisfies the property that no $\overline{X_i}$ ($i \in L$) contains it in its interior, under the assumption that no other vertex satisfies this property.

Suppose to the contrary that $v_j$ is in the interior of $\overline{X_i}$ for some $i \in L$; we choose $i$ such that the clockwise path from the first vertex of $X_i$ to $v_j$ is shortest.
By the choice of $X_{\ell'}$, 
we have 
\begin{equation}
\text{$|\underlying{\overline{X_i}}\cap \underlying{\overline{Z_{\ell}}}|\ge 2$, and $\underlying{\overline{X_i}}$ and $\underlying{\overline{Z_{\ell}}}$ are not co-disjoint}, \label{eq:XiZl}
\end{equation}
where the latter follows from the fact that $\overline{X_{\ell}} \setminus \overline{X_i} \neq \emptyset$. We will show that, in any case, we obtain a contradiction.

We first consider the cases where $|\underlying{X_i} \cap \underlying{Z_{\ell}}| \leq 1$ and either $\underlying{\overline{Z_{\ell}}} \subseteq \underlying{\overline{X_i}}$ or $\underlying{\overline{X_i}} \subseteq \underlying{\overline{Z_{\ell}}}$; see Figure \ref{fig:mainlemma_b}.
Suppose first that $\underlying{\overline{Z_{\ell}}} \subseteq \underlying{\overline{X_i}}$. Then $|X_i|+|Z_{\ell}|+ |Z_{i}|  \leq n+1$ by the assumption that $|\underlying{X_i} \cap \underlying{Z_{\ell}}|\leq 1$, but no $A_c$ is disjoint from all three sets by Claim \ref{cl:3blocking}, so we obtain
\begin{equation}
 3 \leq \gamma(X_{i})+\gamma(Z_{\ell})+\gamma(Z_i)=|X_{i}|+|Z_{\ell}|+|Z_i|-|C(X_{i})|-|C(Z_{\ell})|-|C(Z_i)| \leq n+1-(n-1) =2, \label{eq:3sets}   
\end{equation}
a contradiction. Similarly, if $\underlying{\overline{X_i}} \subseteq \underlying{\overline{Z_{\ell}}}$, then $|X_{\ell}|+|Z_{\ell}|+|X_i|\leq n+1$ but no $A_c$ is disjoint from all of them by Claim \ref{cl:3blocking}, so we can get a contradiction in the same way.  

\begin{figure}[h]
    \centering
    \begin{subfigure}[t]{0.48\linewidth}
    \centering
    \includegraphics[width=0.55\linewidth]{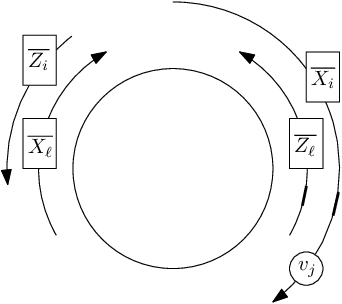}
    \caption{$\underlying{\overline{Z_{\ell}}} \subseteq \underlying{\overline{X_i}}$.}
    \label{fig:mainlemma_b_left}
    \end{subfigure}
    \hfill
    \begin{subfigure}[t]{0.48\linewidth}
    \centering
    \includegraphics[width=0.55\linewidth]{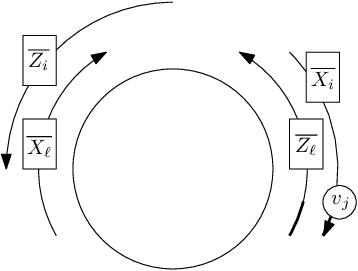}
    \caption{$\underlying{\overline{X_i}} \subseteq \underlying{\overline{Z_{\ell}}}$ (this is only possible if $v_j$ is the second vertex of $Z_{\ell}$).}
    \label{fig:mainlemma_b_right}
    \end{subfigure}
    \caption{Two easy cases where $\underlying{X_i}$ and $\underlying{Z_{\ell}}$ have at most one edge in common. }
    \label{fig:mainlemma_b}
\end{figure}

Thus, we can assume the following (two undirected paths are called incomparable if neither is a subpath of the other).
\begin{equation}
    \text{If $|\underlying{X_i} \cap \underlying{Z_{\ell}}| \leq 1$, then $\underlying{\overline{X_i}}$ and $\underlying{\overline{Z_{\ell}}}$ are incomparable.} \label{eq:incomparable}
\end{equation}

Next, we consider two cases based on the position of $\overline{Z_i}$. 

\paragraph{Case 1: $\underlying{\overline{X_{\ell}}} \cap \underlying{\overline{Z_i}} \neq \emptyset$, and the intersection is not a single edge that underlies the first arcs of both $X_{\ell}$ and $Z_i$.}  We have already seen in \eqref{eq:XiZl} that $|\underlying{\overline{X_i}}\cap \underlying{\overline{Z_{\ell}}}| \geq 2$,  and furthermore, they are not co-disjoint.
Note that $\underlying{\overline{X_{\ell}}}$ and $\underlying{\overline{Z_i}}$ are not co-disjoint either, because $\underlying{\overline{X_\ell}}$ and $\underlying{\overline{Z_i}}$ are both disjoint from $\underlying{\overline{Z_\ell}} \cap \underlying{\overline{X_i}} \neq \emptyset$ by definition. 
We consider subcases based on the relative positions of $\overline{X_{\ell}}$ and $\overline{Z_i}$; see Figure \ref{fig:mainlemma_bb}.

\begin{figure}[h]
    \centering
    \begin{subfigure}[t]{0.32\linewidth}
    \centering
    \includegraphics[width=0.75\linewidth]{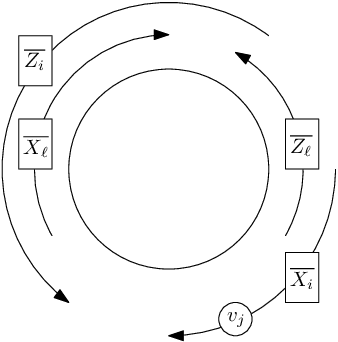}
    \caption{$\underlying{\overline{X_{\ell}}} \subseteq \underlying{\overline{Z_{i}}}$.}
    \label{fig:mainlemma_bb_left}
    \end{subfigure}
    \hfill
    \begin{subfigure}[t]{0.32\linewidth}
    \centering
    \includegraphics[width=0.75\linewidth]{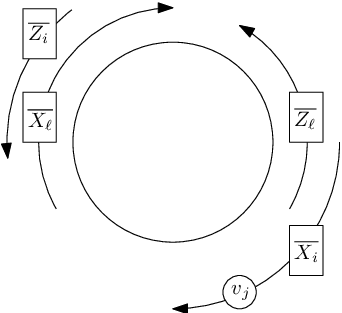}
    \caption{$\underlying{\overline{Z_{i}}} \subseteq \underlying{\overline{X_{\ell}}}$.}
    \label{fig:mainlemma_bb_center}
    \end{subfigure}
    \hfill
    \begin{subfigure}[t]{0.32\linewidth}
    \centering
    \includegraphics[width=0.75\linewidth]{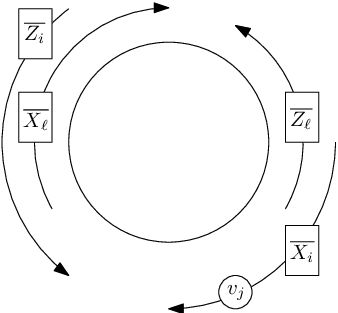}
    \caption{$\underlying{\overline{X_{\ell}}}$, $\underlying{\overline{Z_i}}$ are incomparable.}
    \label{fig:mainlemma_bb_right}
    \end{subfigure}
    \caption{Possible configurations of $\overline{X_{\ell}}$ and $\overline{Z_i}$ in Case 1.}
    \label{fig:mainlemma_bb}
\end{figure}

First, suppose that $|\underlying{X_{\ell}} \cap \underlying{Z_i}|\leq 1$ and either $\underlying{\overline{X_{\ell}}} \subseteq \underlying{\overline{Z_i}}$ or $\underlying{\overline{Z_{i}}} \subseteq \underlying{\overline{X_\ell}}$.
Then, as with the above argument concluding \eqref{eq:incomparable}, we get a contradiction by Claim~\ref{cl:3blocking}.

Thus, combining \eqref{eq:incomparable}, we may assume that \textit{i)} $|\underlying{X_{\ell}} \cap \underlying{Z_i}|\geq 2$ or $\underlying{\overline{X_{\ell}}}$ and $\underlying{\overline{Z_i}}$ are incomparable, and \textit{ii)} $|\underlying{X_i} \cap \underlying{Z_{\ell}}| \geq 2$ or $\underlying{\overline{X_i}}$ and $\underlying{\overline{Z_{\ell}}}$ are incomparable. Now, using Claim \ref{cl:disjoint}, \textit{i)} implies that each $A_c$ can be disjoint from at most one of $X_{\ell}$ and $Z_i$, and \textit{ii)} implies that each $A_c$ can be disjoint from at most one of $X_i$ and $Z_{\ell}$, so we obtain
\begin{multline*}
4 \leq \gamma(X_{\ell})+\gamma(Z_{\ell})+\gamma(X_i)+\gamma(Z_i) \\= |X_{\ell}|+|Z_{\ell}|+|X_i|+|Z_i|-|C(X_{\ell})|-|C(Z_{\ell})|-|C(X_i)|-|C(Z_i)|
\leq 2n - (2n-2) = 2,
\end{multline*}
a contradiction.

\paragraph{Case 2: $\underlying{\overline{X_{\ell}}} \cap \underlying{\overline{Z_i}}=\emptyset$, or their intersection is a single edge that underlies the first arcs of both $X_{\ell}$ and $Z_i$.} 
The last vertex of $Z_i$ cannot be closer to the last vertex of $X_{\ell}$ than the last vertex of $Z_{\ell}$ because of the definition of $Z_{\ell}$ 
(recall the remark just after Claim~\ref{cl:Ki}; $Z_\ell$ minimizes the length of the anticlockwise path from the last vertex of $Z_\ell$ to the last vertex of $X_\ell$).
Thus, the last vertices of $\overline{X_i}, \overline{Z_i}, \overline{X_\ell}$
are in this clockwise order, and $\overline{Z_i} \cap \overline{Z_{\ell}}=\emptyset$.
Also, the first vertex of $Z_i$ and the last vertex of $Z_\ell$ are distinct because the former must be on the anticlockwise path from the second vertex of $\overline{X_\ell}$ to the first vertex of $\overline{Z_\ell}$, while the latter is an interior of the complement of that path. See Figure \ref{fig:mainlemma_c}.

\begin{figure}[h]
    \centering
    \begin{subfigure}[t]{0.48\linewidth}
    \centering
    \includegraphics[width=0.55\linewidth]{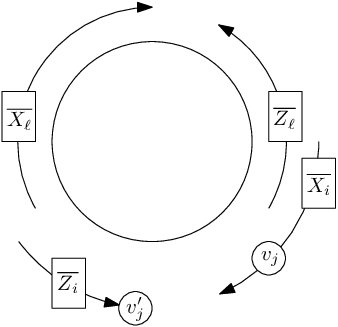}
    \caption{$\underlying{\overline{X_{\ell}}} \cap \underlying{\overline{Z_i}}=\emptyset$.}
    \label{fig:mainlemma_c_left}
    \end{subfigure}
    \hfill
    \begin{subfigure}[t]{0.48\linewidth}
    \centering
    \includegraphics[width=0.55\linewidth]{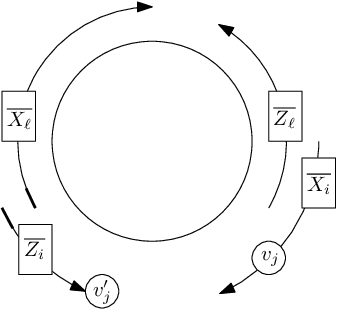}
    \caption{$\underlying{\overline{X_{\ell}}}\cap \underlying{\overline{Z_i}}$ is a single edge that underlies the first arcs of both $X_{\ell}$ and $Z_i$.}
    \label{fig:mainlemma_c_right}
    \end{subfigure}
    \caption{Possible configurations of $\overline{X_{\ell}}$ and $\overline{Z_i}$ in Case 2. 
    }
    \label{fig:mainlemma_c}
\end{figure}

Let $v_{j'}$ be the last vertex of $Z_i$.
We may assume that $v_{j'}$ is in the interior of $\underlying{\overline{X_{i'}}}$ for some $i' \in L$ (otherwise we are done).
Then, $i'\neq i$, and $|\underlying{\overline{X_{i'}}}\cap \underlying{\overline{Z_{\ell}}}|\leq 1$
by the definition of $\ell'$ and $i$ (and if they share an edge, then $i'=\ell' \neq \ell$). Note also that $\underlying{\overline{X_{i'}}}$ and $\underlying{\overline{Z_i}}$ are not co-disjoint, because $\overline{X_{i}} \not \subseteq \overline{X_{i'}}$. 
See Figure \ref{fig:mainlemma_cc} for an illustration.

\begin{figure}[h]
    \centering
    \begin{subfigure}[t]{0.48\linewidth}
    \centering
    \includegraphics[width=0.55\linewidth]{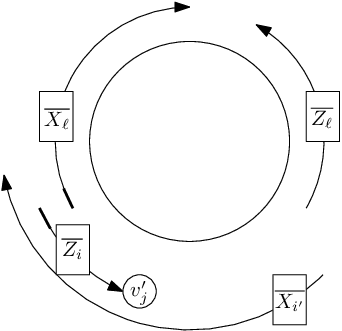}
    \caption{$\underlying{\overline{X_{i'}}} \cap \underlying{\overline{Z_{\ell}}}=\emptyset$.}
    \label{fig:mainlemma_cc_left}
    \end{subfigure}
    \hfill
    \begin{subfigure}[t]{0.48\linewidth}
    \centering
    \includegraphics[width=0.55\linewidth]{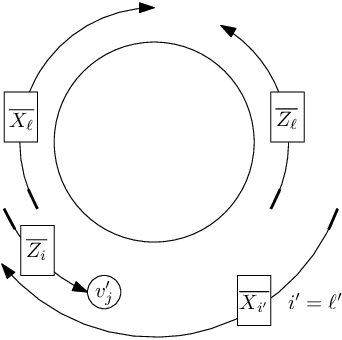}
    \caption{$\underlying{\overline{X_{i'}}}\cap \underlying{\overline{Z_{\ell}}}$ is a single edge that underlies the first arcs of both $X_{i'}$ and $Z_{\ell}$, and then $i'=\ell'$.}
    \label{fig:mainlemma_cc_right}
    \end{subfigure}
    \caption{Possible configurations of $\overline{X_{i'}}$ and $\overline{Z_{\ell}}$.}
    \label{fig:mainlemma_cc}
\end{figure}

We consider subcases based on the relationship of $X_{i'}$, $Z_i$, and $Z_{\ell}$; see Figure \ref{fig:mainlemma_d}.

\begin{figure}[h]
    \centering
    \begin{subfigure}[t]{0.32\linewidth}
    \centering
    \includegraphics[width=0.75\linewidth]{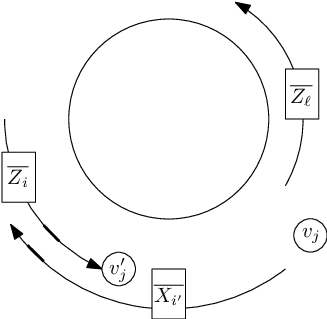}
    \caption{$|\underlying{X_{i'}} \cap \underlying{Z_i}|\leq 1$; $\underlying{X_{i'}} \cap \underlying{Z_{\ell}}=\emptyset$.}
    \label{fig:mainlemma_d_left}
    \end{subfigure}
    \hfill
    \begin{subfigure}[t]{0.32\linewidth}
    \centering
    \includegraphics[width=0.75\linewidth]{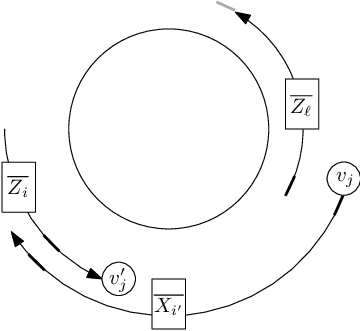}
    \caption{$|\underlying{X_{i'}} \cap \underlying{Z_i}|\leq 1$; the same edge underlies the first arcs of $X_{i'}$ and $Z_{\ell}$ (the gray edge is in neither).}
    \label{fig:mainlemma_d_center}
    \end{subfigure}
    \hfill
    \begin{subfigure}[t]{0.32\linewidth}
    \centering
    \includegraphics[width=0.75\linewidth]{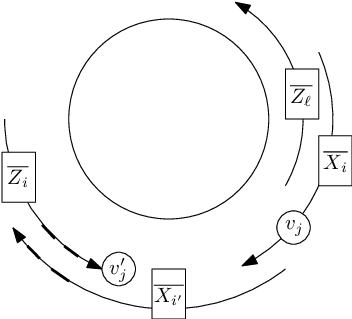}
    \caption{$|\underlying{X_{i'}} \cap \underlying{Z_i}|\geq 2$.}
    \label{fig:mainlemma_d_right}
    \end{subfigure}
    \caption{Subcases in Case 2.}
    \label{fig:mainlemma_d}
\end{figure}

First, consider the case where $|\underlying{X_{i'}} \cap \underlying{Z_i}|\leq 1$. Then $X_{i'}$, $Z_i$, and $Z_{\ell}$ have total size at most $n+1$ (recall $\overline{Z_i} \cap \overline{Z_{\ell}}=\emptyset$ and $|\underlying{X_{i'}}\cap \underlying{Z_{\ell}}|\leq 1$, and observe that if $|\underlying{X_{i'}}\cap \underlying{Z_{\ell}}|=1$, then $i' =\ell' \neq \ell$, so none of the three sets contains the edge after the last arc of $Z_{\ell}$ in the anticlockwise order). %
If in addition $\underlying{\overline{Z_i}} \subseteq \underlying{\overline{X_{i'}}}$, then every $A_c$ must intersect at least one of $X_{i'},Z_i,Z_{\ell}$ by Claim \ref{cl:3blocking}. If $\underlying{\overline{Z_i}} \not\subseteq \underlying{\overline{X_{i'}}}$, then none of the possibilities in Claim \ref{cl:disjoint} hold for $X_{i'}$ and $Z_i$ (note that they are not co-disjoint since neither of them contains the last edge of $\overline{Z_\ell}$), so every $A_c$ intersects at least one of them. Thus, we get a contradiction similar to \eqref{eq:3sets}:
\begin{equation*}
 3 \leq \gamma(X_{i'})+\gamma(Z_{\ell})+\gamma(Z_i)=|X_{i'}|+|Z_{\ell}|+|Z_i|-|C(X_{i'})|-|C(Z_{\ell})|-|C(Z_i)| \leq n+1-(n-1) =2. 
\end{equation*}

Now consider the subcase where $|\underlying{X_{i'}} \cap \underlying{Z_i}| \geq 2$; then, no $A_c$ can be disjoint from both $X_{i'}$ and $Z_i$ by Claim~\ref{cl:disjoint}. We also know by \eqref{eq:incomparable} that $|\underlying{X_i} \cap \underlying{Z_{\ell}}|\geq 2$ or $\underlying{\overline{X_i}}$ and $\underlying{\overline{Z_{\ell}}}$ are incomparable 
(note that, even in the latter case, we have $|\underlying{\overline{X_i}}\cap \underlying{\overline{Z_{\ell}}}| \geq 2$ and these two sets are not co-disjoint by \eqref{eq:XiZl}).

 By Claim~\ref{cl:disjoint}, each $A_c$ can be disjoint from at most one of $X_i$ and $Z_{\ell}$, and we have previously seen that each $A_c$ can be disjoint from at most one of $X_{i'}$ and $Z_i$.
Thus, as $\underlying{\overline{X_i}} \cap \underlying{\overline{Z_i}}=\emptyset$ and $|\underlying{\overline{X_{i'}}} \cap \underlying{\overline{Z_\ell}}|\leq 1$, we obtain
\begin{multline*}
4 \leq \gamma(X_{i'})+\gamma(Z_{\ell})+\gamma(X_i)+\gamma(Z_i) \\ = |X_{i'}|+|Z_{\ell}|+|X_i|+|Z_i|-|C(X_{i'})|-|C(Z_{\ell})|-|C(X_i)|-|C(Z_i)|
\leq 2n+1-(2n-2) =3,
\end{multline*}
a contradiction.
 This completes the proof that $v_j$ is not in the interior of any $\overline{X_i}$ $(i \in L)$.
\end{proof}

Recall that $X^*=\bigcup_{i=1}^s X_i$. Let $C^*=\{c \in C(X^*)\colon \exists i\in [s],\  \overline{X_i}\cap A_c=\emptyset\} ~(=C(X^*)\setminus (\bigcap_{i=1}^s C(\overline{X_i})))$. Our aim now is to show that $|C^*|\geq |X^*|-s$, which, together with Lemma \ref{lem:L}, will imply a strong structural property. We start with two preparatory claims.

\begin{cl}\label{cl:intersection}
    $\bigcap_{i=1}^s \overline{X_i}= \emptyset$. If $A_c$ intersects every $\overline{X_i}$, then it intersects at least one $X_i$.
\end{cl}
\begin{proof}
The second statement follows from the first, because if $A_c\cap \overline{X_i} \neq \emptyset$ but $A_c \cap X_i=\emptyset$, then $\overline{X_i}$ contains the clockwise path of $A_c$. We now prove the first statement. Let $e_j$ be arbitrary, and choose $i \in [s]$ such that $e_j \in P_i$, and among those, the clockwise path from $v_j$ to the last vertex of $P_{i}$ longest. If $e_{j+1}\notin P_{i+1}$, then $e_j \notin \overline{X_i}$, so we are done.

Assume that $e_{j+1} \in P_{i+1}$. By the choice of $i$ and the fact that $|P_{i+1}|\leq n-2$, we have $P_{i+1}\cap \{e_{j-1},e_j\}=\emptyset$. Then $e_j \notin \overline{X_{i+1}}$, and we are done. 
\end{proof}

An immediate consequence of Claim \ref{cl:intersection} is $X^* \subseteq \bigcup_{i=1}^s (\overline{X_i}\setminus \overline{X_{i-1}})$ (recall that we consider the indices modulo $s$, i.e., $X_0 = X_s$). Indeed, if $e \in X^*$, then there are indices $\ell$ and $\ell'$ such that $e \in \overline{X_{\ell}}\setminus \overline{X_{\ell'}}$, which implies that there is an index $i$ such that $e \in \overline{X_i}\setminus \overline{X_{i-1}}$.
\begin{cl}\label{cl:Xiprime}
    Let $i \in [s]$, and let $X_i'\coloneqq X^*\cap (\overline{X_i}\setminus \overline{X_{i-1}})$. Then $|C(X_i')\setminus C(\overline{X_{i-1}})|\geq |X_i'|-1$. 
\end{cl}
\begin{proof}
  Let $e_j$ be the first arc of $X_{i-1}$, and let $e_{j'}$ be the last arc of $X_i$. We know that $\gamma(X_{i-1})=1$, and also that $\gamma(X_{i-1}\cup X_i') \leq 2$, because $(X_{i-1}\cup X_i')-e_j-e_{j'} \subseteq P_i$. Thus, $|C(X_i')\setminus C(X_{i-1})|\geq |X_i'|-1$. The statement of the claim follows by observing that $C(X_i')\setminus C(X_{i-1})=C(X_i')\setminus C (\overline{X_{i-1}})$. This is because if $A_c \cap X_i'\neq \emptyset$ and $A_c \cap X_{i-1}= \emptyset$, then the root of $A_c$ is on the path $(\overline{X_i}\setminus \overline{X_{i-1}})-e_{j'}$, and the reverse of $\overline{X_{i-1}}-e_j$ is a subpath of the anticlockwise path of $A_c$; thus $A_c\cap \overline{X_{i-1}}=\emptyset$. See Figure \ref{fig:xiprime} for an illustration.
\end{proof}

  \begin{figure}[h]
    \centering
    \begin{subfigure}[t]{0.48\linewidth}
    \centering
    \includegraphics[width=0.55\linewidth]{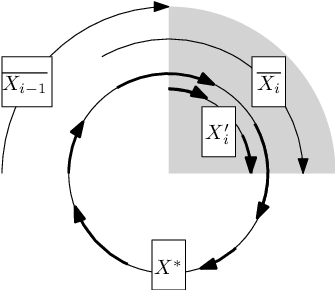}
    \caption{Illustration of the definition of $X_i'$.}
    \label{fig:xiprime_left}
    \end{subfigure}
    \hfill
    \begin{subfigure}[t]{0.48\linewidth}
    \centering
    \includegraphics[width=0.55\linewidth]{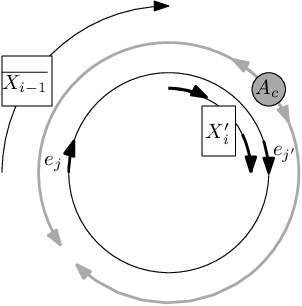}
    \caption{An arborescence $A_c$ for $c\in C(X_i')\setminus C(X_{i-1})$.}
    \label{fig:fig_xiprime_right}
    \end{subfigure}
    \caption{Illustration for Claim~\ref{cl:Xiprime}.}
    \label{fig:xiprime}
\end{figure}

Observe that $C(X_i')\setminus C(\overline{X_{i-1}}) \subseteq C^*$ for every $i \in [s]$. Furthermore, $C(X_i')\setminus C(\overline{X_{i-1}})$ is disjoint from $C(X_{\ell}')\setminus C(\overline{X_{\ell-1}})$ for every $\ell \neq i$, because the root of $A_c$ is a vertex of the path $\overline{X_i}\setminus \overline{X_{i-1}}$ which is not the last vertex of the path when $c \in C(X_i')\setminus C(\overline{X_{i-1}})$, and a vertex of the path $\overline{X_{\ell}}\setminus \overline{X_{\ell-1}}$ which is not the last vertex of the path when $c \in C(X_{\ell}')\setminus C(\overline{X_{\ell-1}})$; these cannot be the same.

By combining the above observations%
, Claim \ref{cl:Xiprime}, and $X^* \subseteq \bigcup_{i=1}^s (\overline{X_i}\setminus \overline{X_{i-1}})$ (shown just after Claim \ref{cl:intersection}),  
\begin{equation}
 |C^*| \geq \sum_{i=1}^s |C(X_i')\setminus C(\overline{X_{i-1}})|\geq \sum_{i=1}^s |X_i'|-s = |X^*|-s. \label{eq:Cstar} 
\end{equation}
By combining Lemma \ref{lem:L} with \eqref{eq:Cstar}, we get $s- |L| \geq |C(X^*)|-|C^*|$. 
By definition, the right-hand side is $|\{c \in C(X^*)\colon  A_c\cap\overline{X_i}\neq \emptyset, \ \forall i\in [s]\}|$, and hence, by using Claim \ref{cl:intersection}, this inequality is rewritten as follows:

\begin{equation} \label{eq:bound1}
 s-|L| \geq |\{c \in [n-1]\colon  A_c\cap \overline{X_i} \neq \emptyset, \ \forall i\in [s]\}|.
\end{equation}
 Let $L'$ denote the set of indices $\ell$ such that $A(Q_{\ell})$ is blocked by some clockwise blocking set. By a similar argument as above for the paths $Q_i$, we get
\begin{equation}\label{eq:bound2}
    t-|L'| \geq |\{c \in [n-1]\colon  A_c\cap \overline{Y_i} \neq \emptyset, \ \forall i\in [t]\}|.
\end{equation}

The following lemma implies that if Theorem \ref{thm:main2} fails to hold, then these inequalities must be tight. %

\begin{lem}\label{lem:bound}
If Theorem \ref{thm:main2} fails to hold, then 
\begin{align*}
  s-|L| &\leq |\{c \in [n-1]\colon  A_c\cap Y_i \neq \emptyset, \ \forall i\in [t]\}|,\\
  t-|L'| &\leq |\{c \in [n-1]\colon  A_c\cap X_i \neq \emptyset, \ \forall i\in [s]\}|.
\end{align*}   
\end{lem}

\begin{proof}
  It is enough to prove the first inequality. If Theorem \ref{thm:main2} does not hold, then  $A(P_{\ell})$ must be blocked by some mixed blocking set $M_{\ell}$ for each $\ell \in [s]\setminus L$. 
  
  \begin{cl}\label{cl:MellPell}
      We can choose $M_{\ell}$ so that $M_{\ell} \cap E=P_{\ell}$.
  \end{cl}
\begin{proof}
 First, observe that $M_{\ell} \cap E$ can be assumed to be a subpath of $P_{\ell}$, because if $M_{\ell}'$ is obtained from $M_{\ell}$ by extending $M_{\ell} \cap E$ to a shortest subpath of $P_{\ell}$, then $C(M_{\ell}')=C(M_{\ell})$, and hence $M_{\ell}'$ is also a blocking set that blocks $A(P_{\ell})$. (To see $C(M_{\ell}')=C(M_{\ell})$, observe that $M_{\ell}$ contains at least one arc of the anticlockwise path of $A(P_{\ell})$ because it is a mixed blocking set. Therefore, any $A_c$ that is disjoint from $M_{\ell}$ is also disjoint from $M_{\ell}'$.)

Suppose now that $M_{\ell} \cap E$ is a proper subpath of $P_{\ell}$, and the $M_{\ell}''$ obtained by adding the remaining arcs of $P_{\ell}$ is not a blocking set. 
 Then $|C(M_{\ell}'')\setminus C(M_{\ell})|>|M_{\ell}''\setminus M_{\ell}|$.
  
  Consider $X\coloneqq (X_{\ell-1} \cup X_{\ell})\cap M_{\ell}$, which is a subset of $P_\ell$ (see Figure \ref{fig:Mell}). We know that $\gamma(X_{\ell-1} \cup X_{\ell})=2$ by Claim \ref{cl:capcup}. On the one hand, $|X|\geq |(X_{\ell-1} \cup X_{\ell})|-|M_{\ell}''\setminus M_{\ell}|-2$. On the other hand, we show that if $c \in C(M_{\ell}'')\setminus C(M_{\ell})$, then $c \in C(X_{\ell-1} \cup X_{\ell})\setminus C(X)$. Since $C(X) \subseteq C(M_{\ell})$, $c \notin C(X)$ is obvious. To see $c \in C(X_{\ell-1} \cup X_{\ell})$, we again use that there exists an anticlockwise arc $f_j \in M_{\ell} \cap A(P_{\ell})$. Since $c\not\in C(M_\ell)$, we have $f_j \not\in A_c$. If $e_j \in A_c$, then the clockwise path of $A_c$ connects $e_j$ and $M_{\ell}''$, so it must go through the first arc of $X_{\ell-1}$ or the last arc of $X_{\ell}$ (see Figure \ref{fig:Mell}. If $e_j \notin A_c$, then its underlying edge is the missing edge of $A_c$; but then the clockwise path of $A_c$ must contain the last arc of $X_{\ell}$, since it contains a clockwise path from $M_{\ell}''$ to the missing edge.
  Thus $C(M_{\ell}'')\setminus C(M_{\ell}) \subseteq C(X_{\ell-1} \cup X_{\ell})\setminus C(X)$.
  We can conclude that
\begin{align*}
    \gamma(X) &= |X| - |C(X)|\\
    &\geq |(X_{\ell-1} \cup X_{\ell})|-|M_{\ell}''\setminus M_{\ell}|-2 - \left(|C(X_{\ell-1}\cup X_\ell)| - |C(M_{\ell}'')\setminus C(M_{\ell})|\right)\\
    &= \gamma(X_{\ell-1} \cup X_{\ell})-2+|C(M_{\ell}'')\setminus C(M_{\ell})|-|M_{\ell}''\setminus M_{\ell}|\\
    &> 0,
\end{align*}
which contradicts that $P_{\ell}$ is a feasible path.
\end{proof}

\begin{figure}[h]
    \centering
    \begin{subfigure}[t]{0.32\linewidth}
    \centering
    \includegraphics[width=0.75\linewidth]{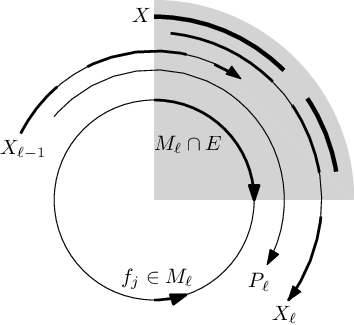}
    \caption{The definition of $X$.}
    \label{fig:fig_Mell_left}
    \end{subfigure}
    \hfill
    \begin{subfigure}[t]{0.32\linewidth}
    \centering
    \includegraphics[width=0.75\linewidth]{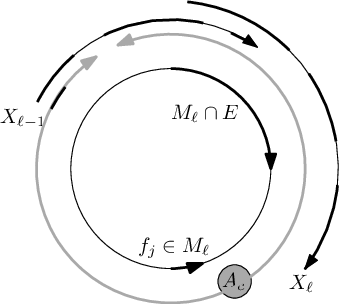}
    \caption{$c\in C(M_{\ell}'')\setminus C(M_{\ell})$, $A_c$ contains the first arc of $X_{\ell-1}$.}
    \label{fig:fig_Mell_right}
    \end{subfigure}
    \hfill
    \begin{subfigure}[t]{0.32\linewidth}
    \centering
    \includegraphics[width=0.75\linewidth]{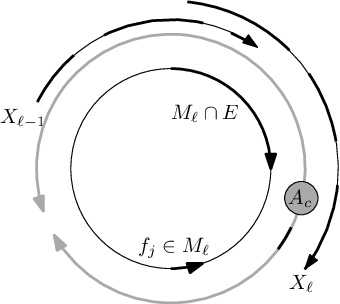}
    \caption{$c\in C(M_{\ell}'')\setminus C(M_{\ell})$, $A_c$ contains the last arc of $X_{\ell}$.}
    \label{fig:fig_Mell_center}
    \end{subfigure}
    \caption{Illustration for Claim~\ref{cl:MellPell}.}
    \label{fig:Mell}
\end{figure}

   In the following, we assume that $M_{\ell} \cap E=P_{\ell}$ is satisfied for every $\ell \in [s]\setminus L$. 

\begin{cl}\label{cl:Acproperties}
    Let $\ell \in [s]\setminus L$, and let $c\in [n-1]$ such that $A_c \cap M_{\ell}=\emptyset$. Then the reverse of $P_{\ell} \cup X_{\ell}$ is contained in $A_c$, and  $A_c \cap Y_i \neq \emptyset$ for every $i \in [t]$. 
\end{cl}
\begin{proof}
    Suppose for contradiction that the reverse of $P_{\ell}$ is not contained in $A_c$. Since $A_c$ is disjoint from $M_{\ell} \supseteq P_{\ell}$, this is only possible if the missing edge of $A_c$ underlies the first arc of $P_\ell$, which we denote by $e_j$. Furthermore, $A_c$ is not an anticlockwise path since it is disjoint from $M_{\ell}$. These imply that $X_{\ell-1}\cap A_c=\{e_{j-1}\}$, so $X_{\ell-1}-e_{j-1}$ is also a blocking set, which contradicts the feasibility of $P_{\ell}$.

    Let $e_{j'}$ be the last arc of $X_{\ell}$ (which is the unique arc in $X_\ell \setminus P_\ell$), and suppose $f_{j'} \notin A_c$. Then the root of $A_c$ is $v_{j'}$. Again, $A_c$ is not an anticlockwise path because it is disjoint from $M_{\ell}$, and $M_{\ell}$ contains an anticlockwise arc different from $f_{j'}$. These imply that $X_{\ell}\cap A_c=\{e_{j'}\}$, so $X_{\ell}-e_{j'}$ is also a blocking set, which contradicts the feasibility of $P_{\ell}$.
    
    Finally, consider any $Y_i$ $(i\in [t])$. We know that $Y_i \not \subseteq A(P_{\ell})$, because $\ell \notin L$. Thus, $Y_i$ contains an anticlockwise arc $f$ that is not in $A(P_{\ell})$, so $f$ is in the reverse of $P_{\ell}\cup X_{\ell}$. As we have proved above that $A_c$ contains the reverse of $P_{\ell}\cup X_{\ell}$, this shows $A_c \cap Y_i \neq \emptyset$.
\end{proof}

Now we are ready to prove the statement of the lemma. For $c\in [n-1]$, let $\alpha_c=|\{\ell\in [s]\setminus L\colon M_\ell \cap A_c = \emptyset\}|$. Let $C^+=\{c \in [n-1]\colon \alpha_c>0\}$. Take any $c\in C^+$. Then $A_c$ contains the reverse of $\alpha_c$ different paths $P_\ell\cup X_\ell$; let $E_c$ denote the union of these $\alpha_c$ paths, and let $F_c$ denote the reverse of $E_c$.
Let $\ell \in [s] \setminus L$ be any index such that $M_{\ell} \cap A_c = \emptyset$.
Then, $M_{\ell}\cap F_c=\emptyset$ as $F_c \subseteq A_c$.
We have also assumed that $M_{\ell}\cap E=P_{\ell}$, so the arcs of $E_c\setminus P_{\ell}$ are not in $M_{\ell}$.
Since $|E_c\setminus P_{\ell}|\geq \alpha_c$, we get that $|M_{\ell}|\leq n-\alpha_c$.

For $\ell \in [s]\setminus L$, let $\beta_{\ell}=|\{c \in C^+\colon M_{\ell} \cap A_c=\emptyset\}|$. Note that $\beta_{\ell}=n-1-C(M_{\ell})$. Since $M_{\ell}$ is a blocking set, we have $\beta_{\ell} \geq n-|M_{\ell}|$. Combining this with the previous inequality, we obtain that
$\alpha_c \leq \beta_{\ell}$ whenever $A_c \cap M_{\ell}=\emptyset$. From this, we get
\begin{align}
   s-|L|
   &= \sum_{\ell \in [s]\setminus L} \frac{1}{\beta_{\ell}}|\{c \in C^+\colon M_{\ell} \cap A_c=\emptyset\}| \nonumber \\ &= \sum_{c \in C^+} \sum_{\substack{\ell \in [s]\setminus L:\\ M_{\ell} \cap A_c=\emptyset}} \frac{1}{\beta_{\ell}} \nonumber \\ &\leq \sum_{c \in C^+} \sum_{\substack{\ell \in [s]\setminus L:\\ M_{\ell} \cap A_c=\emptyset}} \frac{1}{\alpha_c} \nonumber \\ &= \sum_{c \in C^+} \frac{1}{\alpha_c} |\{\ell\in [s]\setminus L\colon M_\ell \cap A_c = \emptyset\}| \nonumber \\ &=|C^+| \nonumber \\ &\leq |\{c \in [n-1]\colon  A_c\cap Y_i \neq \emptyset, \ \forall i\in [t]\}|, \label{eq:cplus}
\end{align}
where the last inequality follows from Claim~\ref{cl:Acproperties}.
The lemma's second inequality can be proved similarly.
\end{proof}

By combining inequalities \eqref{eq:bound1} and \eqref{eq:bound2} with Lemma \ref{lem:bound}, we get that, if Theorem \ref{thm:main2} fails to hold,  all four quantities in Lemma \ref{lem:bound} must be equal, and furthermore, all estimations that we used in the proofs of the inequalities and the lemmas must be tight. To conclude the proof of Theorem \ref{thm:main2}, we have to show that this is impossible. First, we show some implications of the tightness of the inequalities. In addition to the notation already introduced, including inside of the proof of Lemma~\ref{lem:bound} (e.g., $M_\ell$, $E_c$, $F_c$, $\alpha_c$, and $\beta_\ell$), we define $Y^*=\bigcup_{i=1}^t Y_i$.

\begin{lem}\label{lem:tight}
    If Theorem \ref{thm:main2} fails to hold, then the following are true:
    \begin{enumerate}[label = {\rm(\roman*)}]\itemsep0em
        \item $s-|L|=t-|L'|$; \label{item1}
        \item $X^*=\bigcup_{\ell \in L} X_{\ell}$ and $Y^*=\bigcup_{\ell \in L'} Y_{\ell}$; \label{item2}
        \item $|L| \geq 2$ and $|L'| \geq 2$; \label{item3}
        \item $\gamma(X^*)=|L|$ and $\gamma(Y^*)=|L'|$; \label{item4}
        \item  If $A_c\cap \overline{Y_i} \neq \emptyset \ (\forall i\in [t])$, then there is $\ell \in [s]\setminus L$ such that $A_c\cap M_{\ell}= \emptyset$. \label{item:Ac}
    \end{enumerate}
\end{lem}
\begin{proof}
  \ref{item1} holds because the four quantities in Lemma \ref{lem:bound} are equal, where
  \begin{align}
      |\{c \in [n-1]\colon  A_c\cap Y_i \neq \emptyset, \ \forall i\in [t]\}| &\le |\{c \in [n-1]\colon  A_c\cap \overline{Y_i} \neq \emptyset, \ \forall i\in [t]\}|,\label{eq:YiYibar}\\
      |\{c \in [n-1]\colon  A_c\cap X_i \neq \emptyset, \ \forall i\in [s]\}| &\le |\{c \in [n-1]\colon  A_c\cap \overline{X_i} \neq \emptyset, \ \forall i\in [s]\}|\nonumber,
  \end{align}
  hold with equality.
  The tightness of Lemma \ref{lem:L} implies \ref{item2} and \ref{item4}.
  To show \ref{item3}, observe that if $L=\{\ell\}$ and \ref{item2} hold, then the first arc of $X_{\ell-1}$ is in $X_{\ell}$, which is only possible if $P_{\ell}$ is a path of length $n-1$; but then $P_{\ell}$ is a rainbow arborescence.
  Finally, \ref{item:Ac} holds as the last inequality in \eqref{eq:cplus} and \eqref{eq:YiYibar} hold with equality.
\end{proof}
We consider two cases based on the sizes of $L$ and $L'$. We obtain contradictions in both cases under the assumption that Theorem \ref{thm:main2} fails to hold.

\paragraph{Case 1: $|L|=s$, $|L'|=t$.} Since $t-|L'|=0$ and \eqref{eq:bound2} holds with equality, each $A_c$ is disjoint from at least one $\overline{Y_i}$. Since $\gamma(X^*) = |L| \geq 2$ by \ref{item3} and \ref{item4} of Lemma \ref{lem:tight},
there exists $c$ such that $A_c \cap X^*=\emptyset$, and, by the above observation, there exists $i$ such that $A_c \cap \overline{Y_i}=\emptyset$. This is only possible if $u(\overline{Y_i})$ intersects $u(X^*)$ in at most one edge, and if it does, then this is the edge underlying the first arc of $Y_i$ and the missing edge of $A_c$. 

Let $f_j$ be the first arc of $Y_i$. Since $\bigcup_{i=1}^s P_i=E$, there is a path $P_{\ell}$ that contains the reverse of the last arc of $Y_i$. This implies that $P_{\ell}$ contains the reverse of $\overline{Y_i}-f_j$, because the arc after the last edge of $P_{\ell}$ is in $X^*$, while $u(X^*)$ and $u(\overline{Y_i}-f_j)$ are disjoint.
See Figure \ref{fig:final_case1}. %

\begin{figure}[h]
    \centering
    \begin{subfigure}[t]{0.48\linewidth}
    \centering
    \includegraphics[width=0.55\linewidth]{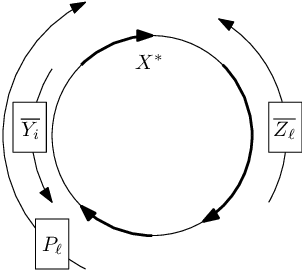}
    \caption{If $u(X^*) \cap u(\overline{Y_i})= \emptyset$, then $P_{\ell}$ contains the reverse of $\overline{Y_i}$.}
    \label{fig:fig_final_case1_left}
    \end{subfigure}
    \hfill
    \begin{subfigure}[t]{0.48\linewidth}
    \centering
    \includegraphics[width=0.55\linewidth]{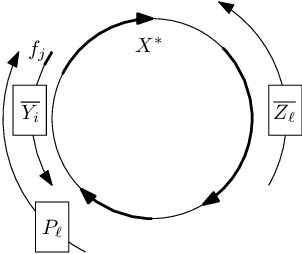}
    \caption{If $u(X^*) \cap u(\overline{Y_i})=\underlying{f_j}$, then $P_{\ell}$ contains the reverse of $\overline{Y_i}-f_j$.}
    \label{fig:fig_final_case1_right}
    \end{subfigure}
    \caption{Possible relative positions of $\overline{Y_i}$ and $P_{\ell}$ in Case 1.}
    \label{fig:final_case1}
\end{figure}

Since $\ell \in [s]=L$, there is an anticlockwise blocking set $Z_{\ell}$ that blocks $A(P_{\ell}$).  Let $X=X_{\ell-1} \cup X_{\ell}$. Then $\gamma(X)=2$ by Claim~\ref{cl:capcup}, so $\gamma(X)+\gamma(Y_i)+\gamma(Z_\ell)=4$. We know that $Z_\ell$ is disjoint from $Y_i$ (as $\overline{Z_\ell}\subseteq A(P_\ell)\setminus P_\ell$ by the definition of $Z_\ell$ and the reverse of $\overline{Y_i}-f_j$ is contained in $P_\ell$), while $u(X)$ intersects both $u(Y_i)$ and $u(Z_\ell)$ in at most one edge (because $u(X^*)$ and $u(\overline{Y_i}-f_j)$ are disjoint, and $X$ is contained in $P_\ell$ with one additional arc attached at both ends), so $|X|+|Y_i|+|Z_{\ell}| \leq n+2$.

Furthermore, we can show that no $A_{c'}$ is disjoint from all three of $X,Y_i,Z_{\ell}$ using a proof similar to that of Claim \ref{cl:3blocking}. Indeed, if $A_{c'}$ is disjoint from both $X$ and $Y_i$, then the clockwise path of $A_{c'}$ must be a subset of $P_{\ell}$, but then $Z_\ell$ contains an anticlockwise arc of $A_{c'}$.  Thus, we have
\[4=\gamma(X)+\gamma(Y_i)+\gamma(Z_{\ell})=|X|+|Y_i|+|Z_{\ell}|-|C(X)|-|C(Y_i)|-|C(Z_{\ell})|\leq n+2-(n-1)=3,\]
a contradiction.

\paragraph{Case 2: $s-|L|=t-|L'|>0$.} 
First, suppose that there exists $c \in [n-1]$ and $i \in [t]$ such that $A_c$ is disjoint from $X^* \cup \overline{Y_i}$. As in the proof of Case 1, this implies that $u(\overline{Y_i})$ intersects $u(X^*)$ in at most one edge, and if it does, then this is the edge underlying the first arc of $Y_i$ and the missing edge of $A_c$. 

Let $f_j$ be the first arc of $Y_i$. If there is an index $\ell \in L$ such that $P_{\ell}$ contains the reverse of $\overline{Y_i}-f_j$, 
then the same proof works as in Case 1. Therefore, we can assume that no such $\ell \in L$ exists. Let $\ell \in [s]\setminus L$ be an index such that $P_{\ell}$ contains the reverse of $\overline{Y_i}-f_j$ (such a path exists by the same argument as in Case~1), and subject to that, the path from the first vertex of $P_{\ell}$ to the last vertex of $Y_i$ is shortest. Let $e_{j'}$ be the first arc of $X_{\ell}$. Since $e_{j'}\in X^*$ while $u(X^*)\cap u(\overline{Y_i}-f_j)=\emptyset$, $e_{j'}$ is not on the reverse of $\overline{Y_i}-f_j$. In addition, the first vertex of $P_{\ell+1}$ cannot be on the clockwise path from the first vertex of $P_{\ell}$ to the last vertex of $Y_i$ by the choice of $\ell$. Then,  $e_{j'}$ 
is on the clockwise path from $v_j$ to the last vertex of $P_{\ell}$.
See Figure \ref{fig:final_case2a}.

\begin{figure}[ht]
    \centering
    \begin{subfigure}[t]{0.48\linewidth}
    \centering
    \includegraphics[width=0.55\linewidth]{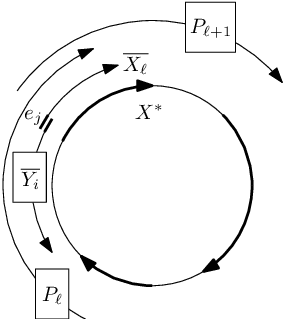}
    \caption{$e_{j'}=e_j$.}
    \label{fig:fig_final_case2a_right}
    \end{subfigure}
    \hfill
    \begin{subfigure}[t]{0.48\linewidth}
    \centering
    \includegraphics[width=0.55\linewidth]{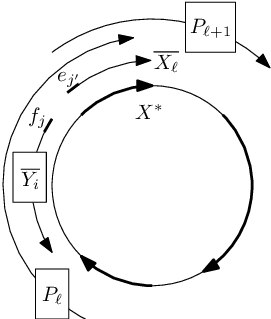}
    \caption{$e_{j'} \neq e_j$.}
    \label{fig:fig_final_case2a_left}
    \end{subfigure}
    \caption{The possible positions of $e_{j'}$ on the clockwise path from $v_{j}$ to the last vertex of $P_{\ell}$.}
    \label{fig:final_case2a}
\end{figure}

Since $X^*=\bigcup_{\ell' \in L} X_{\ell'}$ by \ref{item2} of Lemma~\ref{lem:tight}, there is an index $\ell' \in L$ such that $e_{j'}\in X_{\ell'}$. Since $P_{\ell'}$ does not contain the reverse of $\overline{Y_i}-f_j$ and we have $u(X^*)\cap u(\overline{Y_i}-f_j)=\emptyset$, the first vertex of $P_{\ell'}$ must be on the clockwise path from $v_{j+1}$ to $v_{j'}$.
But then it is in the interior of the clockwise path from the first vertex of $P_{\ell}$ to the first vertex of $P_{\ell+1}$, which is impossible.

We can therefore assume that there is no $c \in [n-1]$ and $i \in [t]$ such that $A_c$ is disjoint from $X^* \cup \overline{Y_i}$. Fix $c\in [n-1]$ such that $A_c$ is disjoint from $X^*$ (such a $c$ exists because $\gamma(X^*)=|L| \geq 2$ by Lemma \ref{lem:tight}). Then $A_c \cap \overline{Y_i} \neq \emptyset\ (\forall i\in [t])$ by our assumption, so $A_c$ is disjoint from $M_{\ell}$ for some $\ell \in [s]\setminus L$ because of property \ref{item:Ac} of Lemma \ref{lem:tight}.
There are $\alpha_c$ such indices $\ell \in [s] \setminus L$ by the definition of $\alpha_c$ given in the proof of Lemma~\ref{lem:bound}. Note that the tightness of \eqref{eq:cplus} implies the tightness of the inequalities used there, and hence $|E_c \setminus P_\ell| = \alpha_c = \beta_\ell$ holds for those $\alpha_c$ indices $\ell$, where we recall that $E_c$ is the union of the paths $P_\ell \cup X_\ell$ for the $\alpha_c$ indices.
These imply that the following hold for some $\ell^*$:
\begin{itemize}\itemsep0em
\item $\{\ell^*,\ell^*+1,\dots,\ell^*+\alpha_c-1\}\cap L=\emptyset$;
\item all the paths $P_\ell$ ($\ell\in \{\ell^*,\dots,\ell^*+\alpha_c-1\}$) are of the same length, and their first arcs form a path of length $\alpha_c$; 
\item the anticlockwise path of $A_c$ is exactly $F_c$, where recall that $F_c$ is the reverse of $E_c$. %
\end{itemize}
The first and second properties are immediate consequences of the equation $|E_c \setminus P_\ell| = \alpha_c$, and the third can be seen as follows.
Take any $\ell \in \{\ell^*,\dots,\ell^*+\alpha_c-1\}$.
By Claim~\ref{cl:Acproperties}, we have $F_c\subseteq A_c$, and hence $M_\ell \cap F_c \subseteq M_\ell \cap A_c = \emptyset$. We also have $M_\ell \cap E = P_\ell$ by Claim~\ref{cl:MellPell}. Then, $\underlying{E_c\setminus P_\ell}\cap \underlying{M_\ell}=\emptyset$ while $|E_c\setminus P_\ell|=\alpha_c$, which implies $|M_\ell|\leq n-\alpha_c$. Actually, this holds with equality because of the tightness in the proof of Lemma~\ref{lem:bound}  (more precisely, this follows from the tightness of $n - \beta_\ell \le |M_\ell| \le n - \alpha_c$). Then, we must have $M_\ell \cap F = F \setminus F_c$ to achieve this cardinality. Combined with $F_c\subseteq A_c$ and $M_\ell\cap A_c=\emptyset$, this concludes $A_c\cap F = F_c$.

Without loss of generality (by shifting the indices), we may assume that $\ell^* = 1$ in what follows.

Since $A_c \cap X^*=\emptyset$, every path $P_\ell$ ($\ell\in [s]$) is either (vertex-)disjoint from the clockwise path of $A_c$ or contains the clockwise path of $A_c$ (otherwise the first edge of $X_{\ell-1}$ or the last edge of $X_\ell$ belongs to $A_c$).
The above properties imply that if $\ell\notin [\alpha_c]$, then $P_\ell$ must contain the clockwise path of $A_c$.
In particular, this holds when $\ell\in L$.
Note also that if the clockwise path of $A_c$ is empty, then $P_\ell$ contains the missing edge of $A_c$ for every $\ell\notin [\alpha_c + 1]$.

Our aim now is to obtain a contradiction using $X^*=\bigcup_{\ell \in L} X_{\ell}$.
Let $\ell$ be the smallest index in $L \subseteq [s] \setminus [\alpha_c]$, and let $e_j$ be the first arc of $X_{\ell-1}$, which is the arc preceding the first arc of $P_{\ell}$.
As $\bigcup_{\ell' \in L} X_{\ell'} = X^*$, we have $e_j \in X_{\ell'}$ for some $\ell' \in L$ with $\ell' > \ell$, and both $P_{\ell}$ and $P_{\ell'}$ contain the clockwise path of $A_c$ (which may be empty); see Figure \ref{fig:final_case2b}.

\begin{figure}[ht]
    \centering
    \begin{subfigure}[t]{0.32\linewidth}
    \centering
    \includegraphics[width=0.75\linewidth]{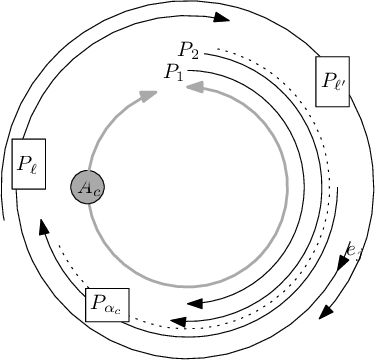}
    \caption{If $A_c \cap E \neq \emptyset$, then both $P_{\ell}$ and $P_{\ell'}$ contain the clockwise path of $A_c$.}
    \label{fig:fig_final_case2b_left}
    \end{subfigure}
    \hfill
    \begin{subfigure}[t]{0.32\linewidth}
    \centering
    \includegraphics[width=0.75\linewidth]{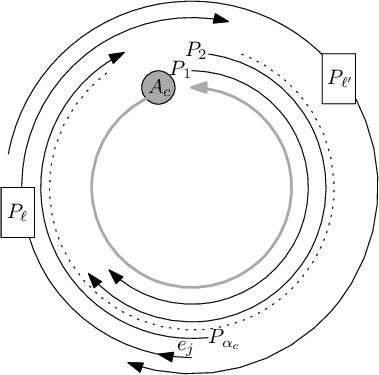}
    \caption{If $A_c \cap E = \emptyset$ and $\ell\neq\alpha_c + 1$, then both $P_{\ell}$ and $P_{\ell'}$ contain the missing edge of $A_c$.}
    \label{fig:fig_final_case2b_center}
    \end{subfigure}
    \hfill
    \begin{subfigure}[t]{0.32\linewidth}
    \centering
    \includegraphics[width=0.75\linewidth]{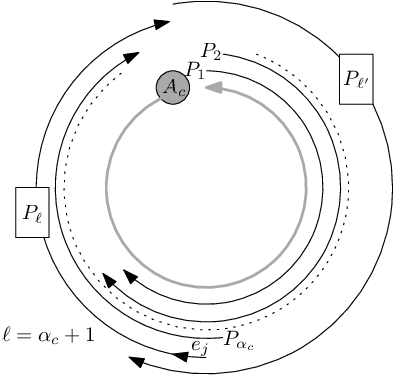}
    \caption{If $A_c \cap E = \emptyset$ and $\ell=\alpha_c + 1$, then it is possible that the missing edge of $A_c$ is in $P_{\ell'}\setminus P_{\ell}$.}
    \label{fig:fig_final_case2b_right}
    \end{subfigure}    
    \caption{The positions of paths with respect to $A_c$.}
    \label{fig:final_case2b}
\end{figure}

As $\ell\in L$, we have $\overline{Z_{\ell}} \subseteq A(P_{\ell})\setminus P_\ell$ (recall Claim~\ref{cl:Ki} and the definition of $Z_\ell$), and hence $\underlying{\overline{Z_{\ell}}} \subseteq \underlying{E\setminus P_\ell}$.
From this, $\underlying{\overline{Z_{\ell}}} \subseteq \underlying{\overline{X_{\ell'}}}$ follows.
To see this, observe $\overline{X_{\ell'}}$ includes the path $E\setminus P_\ell$ as follows: %
\begin{itemize}\itemsep0em
    \item The first arc of $\overline{X_{\ell'}}$ (i.e., the arc preceding $P_{\ell'+1}$) is either on $P_\ell$ or the arc succeeding $P_\ell$, because if $\ell'\neq s$, both $P_{\ell'+1}$ and $P_{\ell}$ contain the root of $A_c$ and $\ell' + 1 > \ell$, and otherwise $P_{\ell'+1}=P_1$ and its preceding arc is the missing edge of $A_c$.
    \item The last arc of $\overline{X_{\ell'}}$ is either on $P_\ell$ or is $e_j$, since $e_j\in X_{\ell'}$.
\end{itemize}
Thus, $\underlying{\overline{Z_{\ell}}} \subseteq \underlying{E\setminus P_\ell} \subseteq \underlying{\overline{X_{\ell'}}}$, and hence indeed $\underlying{\overline{Z_{\ell}}} \subseteq \underlying{\overline{X_{\ell'}}}$.
Note that this also implies $\underlying{\overline{Z_\ell}}\cap \underlying{\overline{Z_{\ell'}}}=\emptyset$ since $\underlying{\overline{X_{\ell'}}}\cap \underlying{\overline{Z_{\ell'}}}=\emptyset$ (as mentioned just after Claim~\ref{cl:Ki}).

Suppose first that $|\underlying{X_{\ell'}} \cap \underlying{Z_{\ell}}|\leq 1$. Then $|X_{\ell'}|+|Z_{\ell}|+|Z_{\ell'}|\leq n+1$, and every $A_{c'}$ intersects at least one of them by Claim \ref{cl:3blocking}, so  
\[3=\gamma(X_{\ell'})+\gamma(Z_{\ell})+\gamma(Z_{\ell'})\leq n+1-(n-1)=2,\]
a contradiction. Thus, we may assume that $|\underlying{X_{\ell'}} \cap \underlying{Z_{\ell}}|\geq 2$.

Next, we consider the blocking set $X\coloneqq X_{\ell-1}\cup X_{\ell}$; we use the notation $\overline{X}=X_{\ell-1}\cup X_{\ell} \cup P_{\ell}$.
Note that $\overline{X}$ is the clockwise path $P_\ell$ with its preceding arc $e_j$ and its succeeding arc attached.
We have $\gamma(X)=2$ by Claim \ref{cl:capcup}, and $\underlying{\overline{X}}$ is either disjoint from $\underlying{\overline{Z_{\ell}}}$ or their intersection is $\underlying{e_j}$ because $\overline{Z_{\ell}}\subseteq A(P_\ell)\setminus P_\ell$. Thus, $|\underlying{\overline{X}}\cap \underlying{\overline{Z_{\ell}}}|\leq 1$.
Furthermore, $\underlying{\overline{Z_{\ell'}}} \subseteq \underlying{\overline{X}}$ 
because $\underlying{\overline{Z_{\ell'}}} \subseteq \underlying{E\setminus P_{\ell'}}$ and the first arc of $\overline{X}$ (i.e., $e_j$) is either on $P_{\ell'}$ or the first arc of $E\setminus P_{\ell'}$ while the last arc of $\overline{X}$ (i.e., the arc succeeding $P_\ell$) is on $P_{\ell'}$ (recall that both $P_\ell$ and $P_{\ell'}$ contain the root of $A_c$ and $\ell < \ell'$).
If $|\underlying{X} \cap \underlying{Z_{\ell'}}|\leq 1$, then $|X|+|Z_{\ell}|+|Z_{\ell'}|\leq n+2$ as we have seen $\underlying{\overline{Z_\ell}}\cap \underlying{\overline{Z_{\ell'}}}=\emptyset$ and $|\underlying{\overline{X}}\cap \underlying{\overline{Z_{\ell}}}|\leq 1$. Also, every $A_{c'}$ intersects at least one of them, because if $A_{c'}$ is disjoint from both $X$ and $Z_{\ell'}$, then the clockwise path of $A_{c'}$ is a subset of $P_{\ell}$, so $Z_{\ell}$ contains an anticlockwise arc of $A_{c'}$. Thus, 
\[4=\gamma(X)+\gamma(Z_{\ell})+\gamma(Z_{\ell'})\leq n+2-(n-1)=3,\]
a contradiction. We can therefore assume that $|\underlying{X} \cap \underlying{Z_{\ell'}}|\geq 2$.
Then, every $A_{c'}$ intersects at least two of the four sets $X,Z_{\ell},X_{\ell'},Z_{\ell'}$ (one of $X,Z_{\ell'}$ and one of $Z_{\ell},X_{\ell'}$), which implies
\[5=\gamma(X)+\gamma(Z_{\ell})+\gamma(X_{\ell'})+\gamma(Z_{\ell'})\leq 2n+1-(2n-2)=3,\]
a contradiction. 
This final contradiction proves that Case 2 is impossible, which concludes the proof of Theorem \ref{thm:main2}, and also the proof of Theorem \ref{thm:cycle}.
\end{proof}

\section{Conclusion}
\label{sec:conc}

In this paper, we investigated the Rainbow Arborescence Conjecture, a special case of transversal-type problems arising from the Ryser--Brualdi--Stein conjecture in the setting of matroid intersection. We showed that finding a rainbow arborescence with a prescribed root is NP-complete, established the conjecture under several structural assumptions, and proved it for the important and nontrivial case when the underlying undirected graph is a cycle. The latter result also yields an application to systems of distinct representatives for families of intervals on a cycle. We close the paper with some open problems. 

\begin{enumerate}\itemsep0em
    \item The result of Kotlar and Ziv~\cite{kotlar2015rainbow} implies that if $G$ is the disjoint union of $n-1$ spanning arborescences, then it contains a rainbow branching of size $\lfloor n/2\rfloor$. In Theorem~\ref{thm:relax1}, we strengthened this by showing that the branching can, in fact, be chosen to be an arborescence. This naturally leads to the question of whether the recent theorem of Berger and McGinnis~\cite{berger2025common} admits a similar strengthening in the setting of arborescences: \emph{If $A_1,\dots,A_{n-1}$ are arborescences on $n$ vertices satisfying $|A_i|\ge \min\{i,\lfloor n/2\rfloor\}$ for each $i$, does there always exist a rainbow arborescence of size $\lfloor n/2\rfloor$?}
    \item A natural relaxation of Conjecture~\ref{conj:1} is to impose only proper coloring constraints. A directed analogue of the Maximum Properly Colored Forest problem was studied in~\cite{berczi2025approximating}, which considers two variants: in the \emph{properly out-colored} variant, only the outgoing edges of each vertex must have distinct colors, whereas in the \emph{properly colored} variant, all edges incident to a vertex -- both incoming and outgoing -- must have distinct colors. This leads to the following questions: \emph{If $A_1,\dots,A_{n-1}$ are spanning arborescences on $n$ vertices, does their union always contain a properly out-colored spanning arborescence? More strongly, does it always contain a properly colored spanning arborescence?}
\end{enumerate}

\paragraph{Acknowledgment.}
We are grateful to Shin-ichi Tanigawa for the initial discussion on the problem, which contains a key observation leading to Theorem~\ref{thm:two_multiroots}.

Yutaro Yamaguchi was supported by JSPS KAKENHI Grant Numbers 20K19743, 20H00605, and 25H01114, and by Start-up Program in Graduate School of Information Science and Technology, Osaka University.
Yu Yokoi was supported by JST ERATO Grant Number JPMJER2301. 
The research was supported by the Lend\"ulet Programme of the Hungarian Academy of Sciences -- grant number LP2021-1/2021, by the Ministry of Innovation and Technology of Hungary from the National Research, Development and Innovation Fund -- grant numbers ADVANCED 150556 and ELTE TKP 2021-NKTA-62, by Dynasnet European Research Council Synergy project -- grant number ERC-2018-SYG 810115, and by JST CRONOS Japan Grant Number JPMJCS24K2.

\bibliographystyle{abbrv}
\bibliography{rainbow}

\end{document}